\pdfoutput=1
\documentclass[11pt]{article}
\usepackage[letterpaper, margin=1.2in, top=1in, bottom=1.2in]{geometry}
\usepackage[T1]{fontenc}
\usepackage[utf8]{inputenc}
\usepackage{amsmath,amssymb,amsthm,mathtools}
\usepackage{aliascnt}
\usepackage{enumitem}
\usepackage{hyperref}
\usepackage[nameinlink,capitalise]{cleveref}
\usepackage{mathrsfs}
\usepackage{setspace}
\usepackage{xcolor}
\usepackage{comment}

\numberwithin{equation}{section}

\AtBeginDocument{%
  \setlength{\abovedisplayskip}{9pt plus 3pt minus 2pt}%
  \setlength{\belowdisplayskip}{9pt plus 3pt minus 2pt}%
  \setlength{\abovedisplayshortskip}{6pt plus 2pt minus 1pt}%
  \setlength{\belowdisplayshortskip}{7pt plus 2pt minus 1pt}%
}

\hypersetup{colorlinks=true,linkcolor=blue,citecolor=blue,urlcolor=blue}

\makeatletter
\def\thm@space@setup{%
  \thm@preskip=8pt plus 2pt minus 2pt
  \thm@postskip=8pt plus 2pt minus 2pt
}
\makeatother

\newtheorem{theorem}{Theorem}[section]
\newaliascnt{proposition}{theorem}
\newtheorem{proposition}[proposition]{Proposition}
\aliascntresetthe{proposition}
\newaliascnt{lemma}{theorem}
\newtheorem{lemma}[lemma]{Lemma}
\aliascntresetthe{lemma}
\newaliascnt{corollary}{theorem}
\newtheorem{corollary}[corollary]{Corollary}
\aliascntresetthe{corollary}
\newaliascnt{definition}{theorem}

\aliascntresetthe{definition}
\newaliascnt{question}{theorem}
\newtheorem{question}[question]{Question}
\aliascntresetthe{question}
\newaliascnt{example}{theorem}
\newtheorem{example}[example]{Example}
\aliascntresetthe{example}

\crefname{theorem}{Theorem}{Theorems}
\Crefname{theorem}{Theorem}{Theorems}
\crefname{proposition}{Proposition}{Propositions}
\Crefname{proposition}{Proposition}{Propositions}
\crefname{lemma}{Lemma}{Lemmas}
\Crefname{lemma}{Lemma}{Lemmas}
\crefname{corollary}{Corollary}{Corollaries}
\Crefname{corollary}{Corollary}{Corollaries}
\crefname{definition}{Definition}{Definitions}
\Crefname{definition}{Definition}{Definitions}
\crefname{remark}{Remark}{Remarks}
\Crefname{remark}{Remark}{Remarks}
\crefname{question}{Question}{Questions}
\Crefname{question}{Question}{Questions}
\crefname{example}{Example}{Examples}
\Crefname{example}{Example}{Examples}

\newtheorem{mainthminner}{Theorem}
\newenvironment{mainthm}[2][]{%
  \renewcommand{\themainthminner}{#2}%
  \begin{mainthminner}[#1]%
}{\end{mainthminner}}
\crefname{mainthminner}{Theorem}{Theorems}
\Crefname{mainthminner}{Theorem}{Theorems}

\newtheorem{mainexampleinner}{Example}

\crefname{mainexampleinner}{Example}{Examples}
\Crefname{mainexampleinner}{Example}{Examples}

\theoremstyle{remark}
\newaliascnt{remark}{theorem}
\newtheorem{remark}[remark]{Remark}
\aliascntresetthe{remark}

\newcommand{\N}{\mathbb N}

\newcommand{\Z}{\mathbb Z}
\newcommand{\Q}{\mathbb Q}
\newcommand{\R}{\mathbb R}
\newcommand{\T}{\mathbb T}

\newcommand{\floor}[1]{\left\lfloor #1\right\rfloor}
\newcommand{\ceil}[1]{\left\lceil #1\right\rceil}
\newcommand{\round}[1]{\left[#1\right]}
\newcommand{\normT}[1]{\left\|#1\right\|_{\mathbb T}}
\newcommand{\nablar}{\nabla_{\mathbb R}\text{-}\mathrm{span}}

\newcommand{\nablaz}{\nabla_{\mathbb Z}\text{-}\mathrm{span}}

\title{Bohr obstructions to recurrence along Hardy-field sequences}
\author{Kangbo Ouyang \and Sa\'ul Rodr\'iguez-Mart\'in \and Leiye Xu \and Shuhao Zhang}
\date{}

\begin{document}
\maketitle

\begin{abstract}
We construct Bohr obstructions to multiple recurrence along rounded Hardy-field sequences, showing that the real derivative-span criterion of Bergelson, Moreira, and Richter is essentially sharp and answering two of their questions. For $E\subseteq\mathbb N$ and $u:\mathbb N\to\mathbb Z$, set
$R_{u}(E):=\{n\in\mathbb N:E\cap(E-u(n))\neq\varnothing\}$. We prove that, if $f_1,\dots,f_k$ are functions of polynomial growth from a Hardy field and some real linear combination of $f_1,\dots,f_k$ and their derivatives has a nonzero
finite limit, then there exist $M\in\mathbb N$ and a basic Bohr set
$E\subseteq\mathbb N$ such that
$\bigcap_{i=1}^k R_{[Mf_i]}(E)$ is not thick.
In particular, for some
Bohr set $E$, the set
$R_{[t^{3/2}]}(E)\cap R_{[\sqrt{2}t^{3/2}+t]}(E)$ is
piecewise syndetic but not thick.
We also prove that, if for some $\lambda_1,\dots,\lambda_k\in\mathbb{R}$ we have
$$\inf_{x\geq 1}\left\|\sum_{i}\lambda_if_i(x)\right\|_{\mathbb{T}}>\frac{1}{2}
\sum_i|\lambda_i|,$$
then $\bigcap_i R_{[f_i]}(E)=\varnothing$ for some basic Bohr set $E$.

More generally, our results apply with $[\cdot]$ replaced by any rounding function $\rho:\mathbb{R}\to\mathbb{Z}$ satisfying $\sup_{x\in\mathbb{R}}|\rho(x)-x|<\infty$.

\end{abstract}

\bigskip
\begin{center}
\begin{minipage}{0.92\textwidth}
\noindent\textbf{2020 Mathematics Subject Classification.}
Primary 37A44; Secondary 37B20, 11B30.

\smallskip
\noindent\textbf{Keywords.}
Hardy-field sequences, multiple recurrence, Bohr sets, return-time sets,
jointly intersective polynomials, derivative span.
\end{minipage}
\end{center}
\bigskip

\section{Introduction and main results}

\subsection{Background and criteria of Bergelson--Moreira--Richter}

This paper studies recurrence along integer sequences obtained by rounding Hardy-field functions. For $E\subset\N$ we write
$$
\overline d(E)=\limsup_{N\to\infty}\frac{|E\cap\{1,\ldots,N\}|}{N},
\qquad
\underline d(E)=\liminf_{N\to\infty}\frac{|E\cap\{1,\ldots,N\}|}{N},
$$
and write $d(E)$ when these two quantities agree.

Our starting point is the phenomenon that sets of integers with positive density must contain rich additive structure. An example of this phenomenon is provided by Szemerédi’s theorem~\cite{Sze75} on arithmetic progressions (also proved ergodically by Furstenberg~\cite{Fur77}): if $E\subseteq\mathbb{N}$ has positive density and any $k\in\mathbb{N}$, then there exist $n,a\in\mathbb{N}$ such that 
\begin{equation*}
a,a+n,a+2n,\dots,a+kn\in E.
\end{equation*}
This result extends to some configurations of the form 
\begin{equation*}
a,a+p_1(n),a+p_2(n),\dots,a+p_k(n), 
\end{equation*}
for polynomials $p_1,\dots,p_k\in\mathbb{Z}[t]$. In this case, recurrence is governed by the following purely arithmetic obstruction. 
We say $p_1,\dots,p_k\in\Z[t]$ are
\emph{jointly intersective} if for every $m\in\N$ there is $n\in\N$ such that
$p_i(n)\equiv0\pmod m$ for all $i=1,\dots,k$. Joint intersectivity is a necessary condition for recurrence: if $p_1,\dots,p_k$ cannot vanish simultaneously modulo some $m\in\mathbb{N}$, then there cannot be 
$n,a\in\mathbb{N}$ such that 
$a,a+p_1(n),a+p_2(n),\dots,a+p_k(n)\in m\mathbb{N}$. The following theorem of
Bergelson, Leibman, and Lesigne shows that joint intersectivity is also
sufficient for recurrence.

\begin{proposition}[{\cite[Theorem~1.1]{BLL08}}]
For polynomials $p_1,\dots,p_k\in\Z[t]$, the following are equivalent:
\begin{enumerate}
    \item $p_1,\dots,p_k$ are jointly intersective.
    \item For every set $E\subset\N$ with $\overline d(E)>0$, there exist
    $a,n\in\N$ such that
    $$
    \{a,a+p_1(n),\dots,a+p_k(n)\}\subset E.
    $$
\end{enumerate}
\end{proposition}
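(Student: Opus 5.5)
The implication (2)$\Rightarrow$(1) is the elementary obstruction already explained above, and I would write it out first. Suppose $p_1,\dots,p_k$ are not jointly intersective. Then there is $m\in\N$ such that no $n\in\N$ makes $p_i(n)\equiv 0\pmod m$ hold for all $i$ at once. Take $E=m\N$, which has $d(E)=1/m>0$. If $a,a+p_1(n),\dots,a+p_k(n)$ all lay in $E$, then each $p_i(n)=(a+p_i(n))-a$ would be divisible by $m$, which is a contradiction. So (2) fails.

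For (1)$\Rightarrow$(2) I would use the Furstenberg correspondence principle. It turns $E$ into a measure-preserving system $(X,\mu,T)$ and a set $A$ with $\mu(A)\ge\overline d(E)$, such that
$$\overline d\bigl(E\cap(E-p_1(n))\cap\dots\cap(E-p_k(n))\bigr)\ge\mu\bigl(A\cap T^{-p_1(n)}A\cap\dots\cap T^{-p_k(n)}A\bigr).$$
It then suffices to show that the right-hand side is positive for some $n$ with every $p_i(n)\neq0$. This nondegeneracy is needed so that the configuration is genuine; when some $p_i$ vanishes identically, that index can simply be dropped.

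The plan has three steps.

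\emph{Step 1: reduce to a nilfactor.} By the Host--Kra/Leibman theory of polynomial multiple ergodic averages, the averages $\frac1N\sum_{n}\mu(A\cap T^{-p_1(n)}A\cap\cdots)$ along any progression $n\in r\Z+s$ are controlled by a characteristic factor. This factor is an inverse limit of nilsystems, so we may replace $1_A$ by its conditional expectation onto a nilsystem $G/\Gamma$ at a cost $\epsilon$.

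\emph{Step 2: use joint intersectivity to find good moduli.} Working in the nilsystem, the orbit $(g^{p_1(n)}x,\dots,g^{p_k(n)}x)$ is a polynomial sequence. By Leibman's equidistribution theorem it equidistributes on a finite union of subnilmanifolds indexed by residue classes of $n$ modulo some $r$, after passing to the rational-spectrum component. Joint intersectivity lets us choose, for every $m$ (in particular $m=r!$ or any multiple governing the finite rational part), a class $n\equiv s\pmod m$ on which all $p_i(n)\equiv0\pmod m$. Along that class the rational (periodic) part of the system acts trivially, so each shift $T^{p_i(n)}$ is close to the identity on the profinite factor.

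\emph{Step 3: positivity on the class.} The goal is to show that along $n\in m\Z+s$ the average is at least $c(\mu(A))>0$, uniformly as $m$ ranges through a divisibility chain. I would do this in two parts. On the connected-component part, use the fact that the polynomial orbit, restricted to the good class, starts at the identity and equidistributes in a subnilmanifold containing the diagonal. A Bergelson--Leibman-style argument (the IP/polynomial Szemerédi theorem, or directly the $\ge\mu(A)^{k+1}$-type bound for integral-free-coefficient families) then gives positive limit. Letting $\epsilon\to0$ and sending the moduli along the chain completes the argument; a nonzero lower density of good $n$ also guarantees that some $n$ has all $p_i(n)\ne0$.

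The main obstacle is Step 3: the limit must be bounded below uniformly across the infinitely many residue conditions. The natural first attempt is to reduce to polynomials with zero constant term. Substitute $n=ms+s_0\,$ with $p_i(s_0)\equiv0\pmod m$ to get $q_i(t)=p_i(mt+s_0)/m\,$, and apply the Bergelson--Leibman polynomial recurrence theorem to $T^m$ acting on $A$. The difficulty is that $q_i(0)$ need not be $0$. Overcoming this requires choosing $m$ so highly divisible that the residual rational eigenfunctions of $T$ see $p_i(s_0)$ as trivial, which is precisely where the full strength of joint intersectivity for all $m$ enters.
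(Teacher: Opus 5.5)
You correctly give the necessity direction (2)$\Rightarrow$(1) via $E=m\N$. That is the only argument the paper offers, since it quotes the proposition from Bergelson--Leibman--Lesigne without proof.

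For (1)$\Rightarrow$(2), Steps~1--2 are a reasonable outline of the standard nilfactor setup. Step~3, however, is the entire content of the theorem, and it has a genuine gap.

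\textbf{The false claim in Step~3.} You say that along a good class $n=mt+s$ the orbit ``starts at the identity.'' It does not: at $t=0$ the point is $(g^{p_1(s)}x,\dots,g^{p_k(s)}x)$. The condition $m\mid p_i(s)$ only guarantees that $g^{p_i(s)}$ maps each connected component of the nilmanifold to itself, i.e.\ it trivializes the finite rational part. On a component, $g^{p_i(s)}$ is in general a nontrivial ergodic nilrotation. So nothing you say places $\tilde x=(x,\dots,x)$ in the sub-nilmanifold on which $t\mapsto(g^{p_i(mt+s)}x)_i$ equidistributes.

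\textbf{The fallback fails for the same reason.} Bergelson--Leibman requires $q_i(0)=0$. No amount of extra divisibility of $m$ makes $T^{p_i(s_0)}$ close to the identity, because divisibility only kills rational eigenfunctions. Separately, a Khintchine-type bound $\ge\mu(A)^{k+1}$ is not available: Behrend-type sets violate it already for $\{n,2n\}$.

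\textbf{The missing idea.} You need a reason why the constant terms are harmless on the connected part. Joint intersectivity implies that if $\sum_i\lambda_ip_i\equiv C$ is constant for some $\lambda_i\in\Z$, then $C=0$, since $C$ is divisible by every $m$. Hence the constants $q_i(0)=p_i(s)/m$ satisfy every integer linear relation satisfied by the nonconstant parts $q_i-q_i(0)$.
\begin{itemize}
\item On a connected torus with a totally ergodic rotation $\beta$, this says exactly that $(q_i(0)\beta)_i$ lies in the subtorus $\overline{\{((q_i(t)-q_i(0))\beta)_i:t\in\Z\}}$. Therefore $0$ lies in the closure of $\{(q_i(t)\beta)_i\}$.
\item On a nilmanifold you need the analogous statement: $\tilde x$ lies in the relevant component of the orbit closure. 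This takes a real argument with Leibman's structure theory; it does not follow from equidistribution alone.
\end{itemize}

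\textbf{The missing quantitative step.} You must also turn these returns near the diagonal into a lower bound for $\lim_N\frac1N\sum_{t\le N}\int 1_A\prod_iT^{p_i(mt+s)}1_A\,d\mu$. That bound has to be uniform in the level of the nilsystem approximating the characteristic factor, so that it beats the $\epsilon$-error from Step~1. You name this requirement but give no mechanism for it.

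As written, the proposal locates the difficulty but does not overcome it.
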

Recurrence has also been studied for sequences obtained from rounding by Hardy-field functions (see \cite{Bos81}). Hardy fields contain a much broader class of functions than polynomials, including
$t^{3/2}$, $t\log t$, or linear combinations of functions of different growth.
Results of Frantzikinakis and Wierdl~\cite{FW09} and
Frantzikinakis~\cite{Fra10,Fra15} established recurrence and convergence for
large classes of Hardy-field sequences.  Bergelson, Moreira, and
Richter~\cite{BMR24} later obtained general criteria which make precise which
real-linear asymptotic relations among the functions can obstruct recurrence.

We first fix the discretization.  A map $\rho:\R\to\Z$ will be called a
\emph{rounding map} if
$$
K_\rho:=\sup_{x\in\R}|\rho(x)-x|<\infty.
$$
The standard examples are $\floor{x}$, $\ceil{x}$, and the nearest-integer
map, for which we use the convention $\round{x}=\lfloor x+0.5\rfloor$.  If $u:\N\to\Z$ and
$E\subset\N$, define
$$
R_u(E):=\{n\in\N:E\cap(E-u(n))\ne\varnothing\},
\qquad
E-r:=\{m\in\N:m+r\in E\}.
$$
Thus $R_u(E)$ is the set of times $n$ for which $u(n)$ occurs as a difference
of two elements of $E$.  For a real-valued function $f$ we write
$R_{\rho\circ f}(E)$ for the same set with $u(n)=\rho(f(n))$.

The first criterion from~\cite{BMR24} concerns when such return-time sets are
thick.  Recall that $A\subset\Z$ is \emph{thick} if for every $N\in\N$ there
exists $a\in\Z$ with $\{a+1,\dots,a+N\}\subset A$.  For   functions
$f_1,\ldots,f_k$, set
\begin{align*}
\nablar(f_1,\ldots,f_k)&=\operatorname{span}_{\R}\{f_i^{(m)}:1\le i\le k,\ m\ge0\},\\
\nablaz(f_1,\ldots,f_k)&=\left\{\sum_{i=1}^k\sum_{m=0}^{M}c_{i,m}f_i^{(m)}:M\ge0,\ c_{i,m}\in\Z\right\},\\
  \mathcal I_{\Z}(f_1,\dots,f_k) &=
  \Bigl\{\sum_{i=1}^k c_i f_i^{(m_i)}:
  c_i\in\Z,\ m_i\in\N\cup\{0\}\Bigr\}.
\end{align*}

\begin{theorem}[Bergelson--Moreira--Richter, {\cite[Corollary~A4]{BMR24}}]
\label{thm:BMR-thick}
Let $f_1,\ldots,f_k$ be functions of polynomial growth from a Hardy field, and
assume that
$$
\lim_{t\to\infty}|F(t)|\in\{0,\infty\}
\qquad\text{for every }F\in\nablar(f_1,\ldots,f_k).
$$
Then for every $E\subset\N$ with $\overline d(E)>0$ and every $\ell\in\N$,
there exist $a,n\in\N$ such that
$$
\{a\}\cup
\{a+[f_i(n+j)]:1\le i\le k,\ 0\le j\le\ell\}
\subset E.
$$
In particular, $\bigcap_{i=1}^kR_{[f_i]}(E)$ is thick. 
\end{theorem}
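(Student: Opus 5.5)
The argument follows the ergodic route used for polynomial multiple recurrence, adapted to Hardy-field functions. By the Furstenberg correspondence principle it is enough to prove the following. For every invertible measure-preserving system $(X,\mu,T)$, every $A\subset X$ with $\mu(A)>0$, and every $\ell\in\N$,
$$
\liminf_{N\to\infty}\frac1N\sum_{n=1}^N\mu\Bigl(A\cap\bigcap_{i\le k}\bigcap_{j\le\ell}T^{-[f_i(n+j)]}A\Bigr)>0,
$$
or at least the averages are positive along some subsequence. Once this holds, both the configuration statement and thickness follow immediately. Given $N$, take $\ell=N$ and let $n$ be a good time. Then $n,n+1,\dots,n+\ell$ all lie in $\bigcap_iR_{[f_i]}(E)$.

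\textbf{Step 1: reduce to a rational Kronecker factor.} I would apply a joint ergodic theorem for Hardy sequences, in the style of Frantzikinakis and of Richter's uniform distribution results. For the family $\{f_i(t+j)\}$, the multiple ergodic averages should be controlled by the rational Kronecker factor together with a nilfactor. The key input here is the hypothesis that every $F\in\nablar(f_1,\dots,f_k)$ tends to $0$ or $\infty$ in absolute value. By Taylor expansion, $f_i(n+j)=\sum_m f_i^{(m)}(n)j^m/m!$. Any real linear combination of the shifted functions is therefore a combination of derivatives, so it lies in $\nablar$.

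\textbf{Step 2: equidistribution.} I would split $\nablar$ into the part with $|F|\to\infty$ and the part with $F\to0$. The part tending to $0$ is asymptotically negligible, so I would discard it, taking care with rounding near integers. For the remaining growing part, Boshernitzan-type criteria in Hardy fields give equidistribution of $(\alpha_{i,j}f_i(n+j))$ in the appropriate subtorus for generic coefficient vectors. The dichotomy for $\nablar$ excludes the dangerous case of a combination converging to a nonzero constant. Only that case could impose a Bohr-type obstruction, and it is exactly the situation the paper studies later.

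\textbf{Step 3: positivity.} Passing to a suitable subsequence of $n$ along which the tuple $(f_i(n+j))$ is close to integers modulo a nilrotation, I would compare the averages with the polynomial case. I expect to use an argument of Bergelson--Leibman--Lesigne type, or Chu--Frantzikinakis--Host, to obtain a lower bound of the form $\int \mathbf 1_A\cdot\mathbb E(\mathbf 1_A\mid\mathcal Z)^{k\ell}>0$.

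The main obstacle is Step 1/2: proving that the characteristic factor is exactly governed by $\nablar$, together with uniform handling of the shifts $j\le\ell$. Since the paper cites this as \cite[Corollary~A4]{BMR24}, in practice I would invoke their main theorem and verify only the reduction from "configurations with shifts" to thickness.
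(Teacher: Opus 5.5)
The paper does not prove this statement; it quotes it from \cite[Corollary~A4]{BMR24}. Your fallback (cite \cite{BMR24} and check only the ``in particular'' clause) is therefore exactly what the paper does. Your check of that clause is correct: a configuration with $\ell=N$ gives a single $a\in E$ with $a+[f_i(n+j)]\in E$ for all $i$ and all $0\le j\le N$, hence $\{n,\dots,n+N\}\subset\bigcap_i R_{[f_i]}(E)$. (In Step~1, the Taylor identity holds only modulo $o(1)$, after truncating at an order $d$ with $f_i^{(d)}\to0$, as in \Cref{lem:diff-iter}. This does not affect the $\{0,\infty\}$ dichotomy.)

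As an independent proof, however, Steps~1--3 have a genuine gap, and Steps~2 and~3 are false as stated. The hypothesis only says $|F|\to\infty$ or $F\to0$. Boshernitzan's equidistribution criterion needs the stronger condition $|F-p|/\log t\to\infty$ for all $p\in\Q[t]$, so the hypothesis allows directions of at most logarithmic growth. For instance, $f_1(t)=\log t$ satisfies the hypothesis, since $\nablar(\log t)$ is spanned by $\log t,t^{-1},t^{-2},\dots$. But $(\alpha\log n)$ has no limiting distribution mod~$1$ for any $\alpha\neq0$, so there is no ``appropriate subtorus''. The same happens inside mixtures: $(n^{3/2},n^{3/2}+\log n)$ has no limiting distribution on $\T^2$.

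The Step~3 bound fails too. Take $Tx=x+\alpha$ on $\T$ with $\alpha\notin\Q$, $A=[0,\delta)$, $k=1$ and $f_1=\log t$. Up to $o(1)$, your average at $N=\floor{e^{M+1/2}}$ equals $(1-e^{-1})\sum_{r=0}^{M}e^{-r}\mu(A\cap T^{-(M-r)}A)$. The set $\{m:\normT{m\alpha}<\delta\}$ has density $2\delta$, so for infinitely many $M$ the integers $M,M-1,\dots$ avoid it for about $1/(2\delta)$ consecutive steps. For those $M$ the average is $O(\delta e^{-1/(2\delta)})$. On the other hand, for every factor $\mathcal Z$,
\[
\int\mathbf 1_A\,\mathbb E(\mathbf 1_A\mid\mathcal Z)^{k\ell}=\int\mathbb E(\mathbf 1_A\mid\mathcal Z)^{k\ell+1}\ge\mu(A)^{k\ell+1}=\delta^{\ell+1},
\]
which is far larger when $\delta$ is small. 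In this example, recurrence does not come from equidistribution. It comes from $[\log(n+j)]$ being constant on very long blocks while running through all large integers. Handling such slowly growing directions together with the fast ones is the substantive content of \cite{BMR24}, and your sketch leaves it out; for these directions Ces\`aro averages need not even converge. As written, your argument proves only the thickness deduction, and the main statement rests on the citation, just as in the paper.
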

Since this sufficient condition is formulated with real coefficients but the conclusion concerns integer-valued shifts, Bergelson, Moreira, and Richter asked whether integer-coefficient derivative data suffice.

\begin{question}[{\cite[Question~6.6]{BMR24}}]\label{ques:Q66}
Does the conclusion of \Cref{thm:BMR-thick} remain true if one only assumes $\lim_{t\to\infty}|F(t)|\in\{0,\infty\}$ for   every
  \(F\in\mathcal I_{\Z}(f_1,\dots,f_k)\)? In particular, is the conclusion of \Cref{thm:BMR-thick} true for $f_1(t)=t^{3/2}$ and $f_2(t)=\alpha t^{3/2}+t$, where $\alpha\in\mathbb{R}\setminus\mathbb{Q}$?
  \end{question}
  
Their second question concerns polynomial shadows. Let $\mathcal{P}_{\mathbb{R}}(f_1,\ldots,f_k)$ be the set of all $p\in\R[t]$ such that $\lim_{x\to\infty}f-p=0$ for some $f\in\operatorname{span}_{\R}(f_1,\ldots,f_k)$. 

\begin{theorem}[Bergelson--Moreira--Richter, part of {\cite[Theorem~A]{BMR24}}]
\label{thm:BMR-A}
Let $f_1,\ldots,f_k$ be of polynomial growth from a Hardy field.  Suppose that there are jointly intersective polynomials $q_1,\ldots,q_\ell\in\Z[t]$ with
\[
\mathcal{P}_{\mathbb{R}}(f_1,\ldots,f_k)\subset  \operatorname{span}_{\R}(q_1,\ldots,q_\ell).
\]
Then every $E\subset\N$ of positive upper density contains $\{a,a+\round{f_1(n)},\ldots,a+\round{f_k(n)}\}$ for some $a,n\in\N$.
\end{theorem}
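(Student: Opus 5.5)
The plan is the ergodic route that Bergelson, Moreira and Richter themselves take: correspondence, reduction to a nilsystem, replacement of the Hardy sequences by polynomial data, and then the Bergelson--Leibman--Lesigne theorem. By the Furstenberg correspondence principle it suffices to show the following. For every invertible measure-preserving system $(X,\mu,T)$ and every $A\subset X$ with $\mu(A)>0$,
\[
\liminf_{N\to\infty}\frac1N\sum_{n=1}^N \mu\bigl(A\cap T^{-\round{f_1(n)}}A\cap\cdots\cap T^{-\round{f_k(n)}}A\bigr)>0,
\]
or at least that the average is positive along a suitably restricted set of $n$, which is still enough to find $a,n$.

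\textbf{Step 1 (reduction to a nilfactor).} First I would show that the averages $\frac1N\sum_n\prod_i T^{\round{f_i(n)}}g_i$ are controlled by a Host--Kra seminorm $\|\cdot\|_{U^s}$, with $s$ depending only on the $f_i$. This would use the PET/van der Corput scheme for Hardy-field sequences of Frantzikinakis~\cite{Fra10,Fra15}, together with the comparison $\round{f_i(n)}=f_i(n)-\{f_i(n)+1/2\}+1/2$ that passes from real to integer shifts. By the Host--Kra structure theorem it then suffices to treat the case where $X=G/\Gamma$ is an $s$-step nilmanifold and $T$ is translation by some $b\in G$.

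\textbf{Step 2 (equidistribution).} On the nilmanifold I would study the orbit $n\mapsto(b^{\round{f_1(n)}}x,\dots,b^{\round{f_k(n)}}x)$. Writing $b^{\round{f}}=b^{f}\,b^{-\{f+1/2\}+1/2}$ in a connected extension, this orbit can be read off from the joint flow $(b^{f_1(n)},\dots,b^{f_k(n)})$ together with the fractional parts $\{f_i(n)+1/2\}$. Richter's uniform distribution theorem for Hardy sequences in nilmanifolds says that such a sequence equidistributes in a sub-nilmanifold determined only by the real-linear relations among the $f_i$. The finite-limit part of $\operatorname{span}_\R(f_1,\dots,f_k)$ is exactly $\mathcal P_\R(f_1,\dots,f_k)$ up to $o(1)$. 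Hence the hypothesis $\mathcal P_\R\subset\operatorname{span}_\R(q_1,\dots,q_\ell)$ lets me find a polynomial orbit $n\mapsto(b^{q_1(n)},\dots,b^{q_\ell(n)})$ whose orbit closure carries the same limiting measure, or a measure dominating it in the sense needed. The components of the $f_i$ that are not asymptotically polynomial should equidistribute "independently" and contribute a full Haar average over a connected subgroup.

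\textbf{Step 3 (positivity).} I would express the limit of the Hardy averages as an integral of the polynomial multicorrelations $\mu\bigl(A\cap T^{-r_1(n)}A\cap\cdots\bigr)$, where each $r_i$ lies in $\operatorname{span}_\R(q_j)$ and the integral is taken against a translation-invariant probability measure. Joint intersectivity of the $q_j$, combined with the BLL theorem (the proposition quoted above) in its quantitative form on the sets $n\in m\N+n_m$, should give a positive lower bound on this integral.

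I expect Step 2 to be the main obstacle. The hard part is controlling how the real span of polynomial parts interacts with the rounding, that is, the fractional parts $\{f_i(n)+1/2\}$. It is also hard to ensure that the limiting sub-nilmanifold is no smaller than the one generated by the integer polynomials $q_j$. If a direct comparison fails, I would first try to pass to the suspension flow $X\times[0,1)$, in which real-valued shifts make sense. There I would apply Richter's theorem to the genuinely continuous sequence $n\mapsto(f_1(n),\dots,f_k(n))$ and only afterwards project back to integer times.
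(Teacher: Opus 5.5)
The paper does not prove this statement. It is quoted from \cite{BMR24} (part of Theorem~A there) and serves only as background for \Cref{ques:Q65} and \Cref{prop:subspace-sufficiency}, so there is no in-paper argument to compare yours with. Judged on its own, your outline points in a reasonable direction, but it is not yet a proof: the hypothesis $\mathcal P_\R(f_1,\dots,f_k)\subset\operatorname{span}_\R(q_1,\dots,q_\ell)$ is never used in a checkable way.

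Step~3 is ill-posed as written. The $r_i\in\operatorname{span}_\R(q_1,\dots,q_\ell)$ are real polynomials (for instance $\sqrt2\,q_1$), so $T^{-r_i(n)}$ is undefined for a $\Z$-action. Also, the Bergelson--Leibman--Lesigne proposition quoted in the paper concerns integer polynomials and a single transformation.

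The natural repair is the suspension you keep only as a fallback. On $X\times[0,1)$ let $S^t(x,s)=(T^{\lfloor s+t\rfloor}x,\{s+t\})$. If $r_i=\sum_j c_{ij}q_j$, then $S^{r_i(n)}=\prod_j (S^{c_{ij}})^{q_j(n)}$. For $\widetilde A=A\times[0,\delta)$ with $\delta<1/2$, the conditions $s<\delta$ and $\{s+r\}<\delta$ force $\lfloor s+r\rfloor=\round{r}$. Hence positivity in the flow transfers back to $\mu\bigl(A\cap\bigcap_iT^{-\round{r_i(n)}}A\bigr)>0$.

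However, the recurrence theorem this requires is one for the integer polynomials $q_j$ acting through \emph{several commuting} maps $S^{c_{ij}}$, not the one-dimensional statement you cite. You must also show that the non-polynomial Hardy components do not destroy this recurrence. That is the entire content of your Step~2, where ``lets me find a polynomial orbit \dots\ or a measure dominating it'' is asserted, not proved. Any correct version must use joint intersectivity essentially. For the pair $(g_1,g_2)$ of \Cref{exp:D}, $\mathcal P_\R=\R(t+\xi)$ contributes a fixed shift $S^{Q\xi}$, and \Cref{Thm:C} yields a Bohr set with no joint returns at all. Your description never says where this input enters or why such a case is excluded.

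Step~1 is stated too generally to be true. Take $f_1(t)=t^{3/2}$, $f_2(t)=t^{3/2}+\frac13$, a Bernoulli shift $T$, and a real $h$ with $\int h\,d\mu=0$ depending only on the zeroth coordinate. Then $\round{f_2(n)}-\round{f_1(n)}\in\{0,1\}$ vanishes on a set of density $2/3$, and $\int h\cdot Th\,d\mu=0$. Therefore
\[
\int\frac1N\sum_{n=1}^N T^{\round{f_1(n)}}h\cdot T^{\round{f_2(n)}}h\,d\mu\longrightarrow\frac23\|h\|_2^2\neq0,
\]
although $\|h\|_{U^s}=0$ for every $s$. So no seminorm control holds for arbitrary Hardy families. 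The hypothesis must already enter at the characteristic-factor stage; here it is used through the fact that $\mathcal P_\R$ then contains no nonzero constant.

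Even for families the hypothesis allows, such as $t^{3/2}$ and $\lambda t^{3/2}+t$ from \Cref{exa:C} (equal growth, with a polynomial linear combination), the needed control is not in \cite{Fra10,Fra15}. Their PET arguments cover special configurations such as $\{[a(n)],2[a(n)],\dots\}$ or families of different growth. Allowing such mixtures of polynomial and non-polynomial behaviour is one of the main novelties of \cite{BMR24}.

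Finally, the identity $\round{f}=f-\{f+1/2\}+1/2$ does not by itself convert real shifts into integer ones. The map $y\mapsto\{y+1/2\}$ is discontinuous, and under the hypothesis the limiting distribution can charge the discontinuity. For example, with $f(t)=t/2$ one has $\{n/2+1/2\}=0$ for every odd $n$. So an equidistribution-plus-continuity argument does not apply directly.

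Each of your three steps is therefore a substantial theorem in its own right. The step where the hypothesis must do its work is left as an expectation.
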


Let $\mathcal P_{\Z}(f_1,\ldots,f_k)$ be the set of all $q\in\Z[t]$ such that $|f-q|\to0$ for some non-zero $f\in\operatorname{span}_{\R}(f_1,\ldots,f_k)$, and say that an infinite collection of polynomials $\Q\subseteq\mathbb{Z}[t]$ is jointly intersective if for all $m\in\mathbb{N}$ there is $n\in\mathbb{N}$ such that $q(n)$ is a multiple of $m$ for all $q\in\Q$.

\begin{question}[{\cite[Question~6.5]{BMR24}}]\label{ques:Q65}
Can the strong polynomial-span assumption in \Cref{thm:BMR-A} be replaced by the weaker assumption that $\mathcal P_{\Z}(f_1,\ldots,f_k)$ is jointly intersective?
\end{question}

\begin{remark}
It is worth noting that the bounded error condition of the rounding function, $\rho(t)=t+O(1)$, is insufficient to guarantee non-empty multiple returns without further arithmetic conditions on the range of $\rho$. Indeed, if we let $\rho(t)=2\lfloor t/2\rfloor+1$, then for any \(f:\mathbb{N}\to\mathbb{R}\) we will have \(R_{\rho\circ f}(2\mathbb{N})=\varnothing\), because for all \(n\in\mathbb{N}\) the number \(\rho(f(n))\) is odd. However, the Bohr-obstruction results in this article hold true for arbitrary rounding functions.
\end{remark}

\subsection{Main results}

Our first two main results show that the derivative-span criterion of \Cref{thm:BMR-thick} is close to optimal. In the derivative-only case no homothety is needed.

In this article, for $x\in\mathbb{R}$ we use the standard notation
$\normT{x}=\min_{m\in\Z}|x-m|$ and view it as the distance to $0$ in
$\T=\R/\Z$.
Recall that a set $B\subset\N$ is said to be a Bohr neighborhood of $0$ if it contains some basic Bohr set, by which we mean a set of the form
\[
B(v_1,\dots,v_d;\varepsilon)
:=
\{n\in\N:\normT{nv_j}<\varepsilon \text{ for }1\le j\le d\},
\]
for some $v_1,\dots,v_d\in\mathbb{R}$ and $\varepsilon>0$.
It is well-known that $B(v_1,\dots,v_d;\varepsilon)$ has positive natural density (see \Cref{lem:bohr-positive-density}), although we include a proof for convenience of the reader.

\begin{mainthm}[Derivative-only obstruction]{A}\label{thm:A}
Let $f_1,\ldots,f_k$ be functions of polynomial growth from a Hardy field, and let $\rho:\R\to\Z$ satisfy $\sup_x|\rho(x)-x|<\infty$.  If some $g\in\nablar(f_1',\ldots,f_k')$ satisfies $\lim_{t\to\infty}|g(t)|\in(0,\infty)$, then there exists a basic Bohr set \(E\subset\N\) such that
$R_{\rho\circ f_1}(E)\cap\cdots\cap R_{\rho\circ f_k}(E)$ is not thick.
\end{mainthm}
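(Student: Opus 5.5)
The plan is to turn the asymptotic relation $g\to L\neq 0$ into a relation among finitely many integer differences $\rho(f_i(n+j))-\rho(f_i(n+j'))$ inside a window of bounded length. Membership of all of these in $E-E$ for a suitable Bohr set $E$ will then force $\normT{\theta L}$ to be small, which we arrange to be false. Write $g=\sum_{i}\sum_{m=1}^{D}c_{i,m}f_i^{(m)}$ with $g(t)\to L$, $0<|L|<\infty$, and put
\[
h:=\sum_i\sum_{m=1}^{D}c_{i,m}f_i^{(m-1)},
\]
so that $h'=g$. Then $h(n+q)-h(n)=qL+o(1)$ as $n\to\infty$, for each fixed $q$. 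The basic observation is that if $E=B(v_1,\dots,v_d;\varepsilon)$, then $E-E\subset B(v_1,\dots,v_d;2\varepsilon)$ (up to sign). Hence $n\in\bigcap_iR_{\rho\circ f_i}(E)$ forces $\normT{v_j\rho(f_i(n))}<2\varepsilon$ for all $i,j$.

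\textbf{Step 1 (discretizing the derivatives).} Fix a large step $q\in\N$. For a Hardy function of polynomial growth, $f^{(r)}(t)\to0$ once $r$ exceeds its growth degree. Taylor expansion in windows $[n,n+Dq]$ then expresses each $f_i^{(m-1)}(n)$, up to $o(1)$ as $n\to\infty$ with $q$ fixed, as a fixed real combination of the finite differences
\[
\Delta_q^{r}f_i(n)=\sum_{l=0}^{r}(-1)^{r-l}\binom{r}{l}f_i(n+lq),\qquad r\le R.
\]
I plan to do this by inverting the truncated operator series $\Delta_q=e^{qd/dt}-1$, i.e.\ $q\,\frac{d}{dt}=\log(1+\Delta_q)$ truncated. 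The coefficients attached to $f^{(m-1)}$ are $O(q^{-(m-1)})$, up to the $\Delta$-polynomial structure. This yields real numbers $a_{i,l}=a_{i,l}(q)$ with
\[
qL=\sum_{i}\sum_{l=0}^{R'}a_{i,l}\bigl(f_i(n+lq)-f_i(n)\bigr)+o(1),
\]
where the key point is that $A_q:=\sum_{i,l}|a_{i,l}|$ grows at most like $O(q^{\,0})\cdot$(the $m=1$ part) plus terms that are $O(q^{-1})$ relative to $qL$. Equivalently, after dividing by $q$, the total weight $A_q/q\to0$ while the left side is exactly $L$.

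\textbf{Step 2 (choice of Bohr set).} Set $\theta=1/(2qL)$ and use frequencies $v_{i,l}=\theta a_{i,l}$, with $E=B(\{v_{i,l}\};\varepsilon)$. Suppose the intersection were thick. Then it contains $n,n+q,\dots,n+R'q$ for arbitrarily large $n$, since thick sets contain arbitrarily long intervals arbitrarily far out. Each $\rho(f_i(n+lq))$, and hence each difference $\rho(f_i(n+lq))-\rho(f_i(n))$, lies in $E-E$ up to a bounded number of additions. Replacing $f$ by $\rho\circ f$ costs at most $2K_\rho\sum|\theta a_{i,l}|=O(A_q/(qL))$. We would get
\[
\tfrac12=\theta qL\equiv\sum_{i,l}\theta a_{i,l}\bigl(\rho(f_i(n+lq))-\rho(f_i(n))\bigr)+O\!\left(\tfrac{A_q}{q|L|}\right)+o(1)\pmod 1,
\]
where the main sum is within $O(\#\{i,l\}\,\varepsilon)$ of an integer. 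Choosing $q$ large and then $\varepsilon$ small gives a contradiction with $\normT{1/2}=1/2$. So the intersection contains no interval of length $R'q+1$ beyond some point. If needed, I would also handle small $n$ by noting that thickness requires intervals arbitrarily far out, or simply by intersecting $E$ further.

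\textbf{Main obstacle.} The delicate step is Step 1: showing that the rounding error incurred on the $m=1$ terms is not amplified. The coefficients on $f_i(n+lq)$ coming from higher-derivative terms $c_{i,m}f_i^{(m-1)}$ with $m\ge2$ must scale like $q^{-(m-1)}$, so that after multiplying by $\theta\sim 1/q$ the total rounding error $\theta K_\rho\sum|a_{i,l}|$ tends to $0$. The $m=1$ terms telescope exactly into $f_i(n+q)-f_i(n)$, with coefficient $c_{i,1}$ independent of $q$. I would first try using a single step $q$ and the mean-value form $\Delta_q^{r}f(n)=q^rf^{(r)}(\xi)$. If the higher derivatives $f^{(r)}$, $r\le D$, are not slowly varying enough on windows of length $Dq$, I would first reduce, via the Hardy-field structure, to the case where every $f_i^{(m)}$ appearing is either $o(1)$ (and can be dropped) or asymptotic to a polynomial. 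For those terms the finite-difference identities are exact up to $o(1)$.
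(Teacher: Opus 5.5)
Your proposal is correct and follows essentially the paper's route. Both arguments turn $g\to L$ into a combination of translates $f_i(n+j)$ whose total coefficient weight stays bounded while its limit grows linearly in a step parameter: your $h(n+q)-h(n)\to qL$ plays the role of the paper's $\widetilde f(t+M)-\widetilde f(t)\to ML$ from \Cref{lem:window}. Both then choose the step large enough that the Bohr set with frequencies $\lambda/(2\cdot\mathrm{limit})$ defeats the rounding error $K_\rho\cdot(\mathrm{weight})$.

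The ``main obstacle'' you flag does not arise. For fixed $q$ the Taylor/Vandermonde inversion is exact up to $o(1)$, and discretizing $h$ once at unit step already gives $q$-independent weights, which is precisely the paper's telescoping trick. So your fallback reduction is unnecessary, and it would in fact be false as stated: for $f=t^{3/2}$ the derivative $\tfrac32t^{1/2}$ is neither $o(1)$ nor asymptotic to a polynomial.
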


For the full derivative span, the obstruction holds after a positive homothety.

\begin{mainthm}[Full-span obstruction up to homothety]{B}\label{thm:B}
Let $f_1,\ldots,f_k$ be functions of polynomial growth from a Hardy field, and let $\rho:\R\to\Z$ satisfy $\sup_x|\rho(x)-x|<\infty$.  If some $g\in\nablar(f_1,\ldots,f_k)$ satisfies $\lim_{t\to\infty}|g(t)|\in(0,\infty)$, then there exist $M\in\mathbb{N}$ and a basic Bohr set \(E\subset\N\) such that $R_{\rho\circ(Mf_1)}(E)\cap\cdots\cap R_{\rho\circ(Mf_k)}(E)$ is not thick.
\end{mainthm}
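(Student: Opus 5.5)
The plan is to turn a real linear relation among derivatives into a relation among finite differences of the rounded sequences, and then read that relation off modulo $1$ using a Bohr set whose frequencies are the coefficients of the relation. Fix $g=\sum_{i,m}c_{i,m}f_i^{(m)}\in\nablar(f_1,\ldots,f_k)$ with $g(t)\to L\neq0$ finite. Write $\Delta h(n)=h(n+1)-h(n)$.

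\textbf{Step 1 (derivatives to differences).} I want real coefficients $b_{i,m}$, $0\le m\le D$, with
\[
\sum_{i,m}b_{i,m}\,\Delta^m f_i(n)=L+o(1)\qquad(n\to\infty).
\]
Since the $f_i$ have polynomial growth in a Hardy field, $f_i^{(r)}(t)\to0$ for all $r$ beyond some order $r_0$, and the derivatives are eventually monotone. Taylor's formula with remainder then gives, for each $m$,
\[
\Delta^m f_i(n)=\sum_{r=m}^{R}a_{m,r}f_i^{(r)}(n)+o(1),
\]
with $a_{m,m}=1$. The remainder involves $f_i^{(R+1)}$ on $[n,n+m]$, which is $o(1)$ by eventual monotonicity once $R\ge r_0$. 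This system is upper triangular with unit diagonal, so it can be inverted: each $f_i^{(m)}$ with $m\le R$ is a fixed real combination of $\Delta^{m'}f_i$, $m'\ge m$, up to $o(1)$. Substituting into $g$ produces the $b_{i,m}$.

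I expect this step to be the main technical obstacle. The delicate part is controlling the remainders uniformly, since the lower derivatives may be unbounded. Only derivatives of order above $r_0$ ever enter a remainder, and those do tend to $0$.

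\textbf{Step 2 (the Bohr set and the homothety).} The identity above is linear, so with $Mf_i$ in place of $f_i$ the right-hand side becomes $ML+o(1)$. Let $K=K_\rho$ and $C=\sum_{i,m}|b_{i,m}|2^m$. Set $u_i(n)=\rho(Mf_i(n))=Mf_i(n)+O(K)$. Then
\[
\sum_{i,m}b_{i,m}\Delta^m u_i(n)=ML+o(1)+e(n),\qquad |e(n)|\le CK.
\]
Choose $M$ large and set $\theta=1/(2ML)$. Then $\theta ML=1/2$ while $|\theta e(n)|\le CK/(2M|L|)$ is small.

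Now let $E=B(\{\theta b_{i,m}\};\varepsilon)$, a basic Bohr set, with $\varepsilon$ small relative to $C$. If $n\in R_{u_i}(E)$, then $u_i(n)=a-b$ with $a,b\in E$, so $\normT{u_i(n)\theta b_{i,m}}<2\varepsilon$ for every $m$.

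\textbf{Step 3 (contradiction on long intervals).} Suppose $n,n+1,\ldots,n+D$ all lie in $\bigcap_iR_{u_i}(E)$. Since $\Delta^m u_i(n)$ is an integer combination of $u_i(n+j)$ with binomial coefficients, we get
\[
\normT{\theta\sum_{i,m}b_{i,m}\Delta^m u_i(n)}\le 2\varepsilon C.
\]
On the other hand, by Step 2 this quantity lies within $o(1)+CK/(2M|L|)$ of $1/2$. For $M$ and $1/\varepsilon$ large and $n\ge n_0$ this is impossible. Hence $\bigcap_iR_{\rho\circ(Mf_i)}(E)$ contains no interval of length $D+1$ beyond $n_0$, so it is not thick.
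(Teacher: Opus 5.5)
Your proposal is correct and follows essentially the paper's argument. The paper likewise writes $g$, up to $o(1)$, as a finite real combination of shifts $f_i(t+q)$ tending to $L$, though it uses a Vandermonde inversion where you use a unit-triangular system in the $\Delta^m$ basis (the two bases differ only by an integer binomial change of basis); it then takes $M$ large enough that $K_\rho\sum|\gamma_{i,q}|<ML$, uses a Bohr set with frequencies $\gamma_{i,q}/(2ML)$, and argues on windows of consecutive return times, exactly as you do. There is one harmless slip: because the coefficients $b_{i,m}$ are already built into the frequencies of $E$, the bound in Step 3 should be $2\varepsilon\sum_{i,m}2^m$ rather than $2\varepsilon C$, so $\varepsilon$ should be chosen small relative to $\sum_{i,m}2^m$.
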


\begin{remark}
\label{4rew9dos9oewpdsl}
The constant \(M\) in \Cref{thm:B} is necessary, not merely a by-product of the proof. For example, let \(f_1(x)=\log x\) and \(f_2(x)=\log x+0.1\). If \(E\subset\N\) is infinite, then \(E-E\) contains arbitrarily large positive integers. For every \(d\in(E-E)\cap\N\), all \(m\in\N\) with \(d\le \log m<d+0.9\) belong to \(R_{\lfloor f_1\rfloor}(E)\cap R_{\lfloor f_2\rfloor}(E)\). These intervals contain arbitrarily long blocks of consecutive integers, so the intersection is thick.

On the other hand, after the homothety \(M=10\), the set \(R_{\lfloor 10f_1\rfloor}(2\N)\cap R_{\lfloor 10f_2\rfloor}(2\N)\) is empty. Indeed, a return time for \(2\N\) must be even, whereas \(\lfloor 10\log n+1\rfloor=\lfloor 10\log n\rfloor+1\) for every \(n\in\N\), so the two shifts cannot both be return times for \(2\N\).
\end{remark}
We further give a sufficient criterion for sets of multiple Hardy returns to be empty, which will allow us to answer another question of \cite{BMR24}.

\begin{mainthm}[Empty returns]{C}\label{Thm:C}
Let $\rho:\R\to\Z$ be a rounding map, and suppose some functions $f_1,\dots,f_k:\mathbb{N}\to\mathbb{R}$ satisfy, for some $N\in\mathbb{N}$ and $\lambda_1,\dots,\lambda_k\in\mathbb{R}$, that
\begin{equation}
\label{5te9rpdfol}
\inf_{x\geq N}\left\|\sum_{i}\lambda_if_i(x)\right\|_{\mathbb{T}}> K_\rho\sum_i|\lambda_i|.
\end{equation}
Then there is a basic Bohr set \(E\subset\N\) such that \(\cap_{i=1}^k R_{\rho\circ f_i}(E)\cap [N,\infty)=\varnothing\).
\end{mainthm}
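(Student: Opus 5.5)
The plan is to take for $E$ the basic Bohr set built from the coefficients in \eqref{5te9rpdfol} themselves,
$$
E:=B(\lambda_1,\dots,\lambda_k;\varepsilon),
$$
with $\varepsilon>0$ to be fixed at the end. We will show that any $n\geq N$ lying in $\bigcap_{i}R_{\rho\circ f_i}(E)$ violates \eqref{5te9rpdfol}. Write
$$
\delta:=\inf_{x\ge N}\normT{\sum_i\lambda_if_i(x)} \qquad\text{and}\qquad \eta:=\delta-K_\rho\sum_i|\lambda_i|>0,
$$
and choose $\varepsilon$ with $2k\varepsilon\le\eta$. The only tools needed are elementary properties of $\normT{\cdot}$: it is symmetric, it satisfies $\normT{x+y}\le\normT{x}+\normT{y}$, and $\normT{x}\le|x|$.

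Suppose $n\ge N$ lies in every $R_{\rho\circ f_i}(E)$, and put $r_i:=\rho(f_i(n))\in\Z$. By definition of $R_{\rho\circ f_i}(E)$ there is $a_i\in E$ with $a_i+r_i\in E$. Both $a_i$ and $a_i+r_i$ satisfy $\normT{\,\cdot\,\lambda_i}<\varepsilon$, so by symmetry and subadditivity
$$
\normT{r_i\lambda_i}\le\normT{(a_i+r_i)\lambda_i}+\normT{a_i\lambda_i}<2\varepsilon .
$$
Here only the $i$-th frequency of $E$ is used for the $i$-th shift. This is why the $a_i$ may differ for different $i$, and why a single Bohr set with all $k$ frequencies handles every index at once. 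Summing over $i$ gives $\normT{\sum_i\lambda_i r_i}<2k\varepsilon$.

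Next we pass from the rounded values back to the $f_i(n)$. Since $|r_i-f_i(n)|\le K_\rho$ for each $i$,
$$
\normT{\sum_i\lambda_if_i(n)}\le\normT{\sum_i\lambda_ir_i}+\Bigl|\sum_i\lambda_i\bigl(f_i(n)-r_i\bigr)\Bigr| <2k\varepsilon+K_\rho\sum_i|\lambda_i|\le\delta .
$$
This contradicts the definition of $\delta$, since $n\ge N$. Hence $\bigcap_i R_{\rho\circ f_i}(E)\cap[N,\infty)=\varnothing$.

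There is no real obstacle here. The one point to watch is that the inequality in \eqref{5te9rpdfol} is strict, which is exactly what gives the slack $\eta>0$ needed to absorb the $2k\varepsilon$ error from the Bohr set. Degenerate coefficients $\lambda_i=0$ cause no trouble: the corresponding frequency imposes no condition on $E$ and contributes nothing to either sum. By \Cref{lem:bohr-positive-density}, $E$ is a genuine basic Bohr set of positive density, so the conclusion is not vacuous.
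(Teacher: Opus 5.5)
Your proof is correct and is essentially the paper's argument. The paper uses the same Bohr set $E=B(\lambda_1,\dots,\lambda_k;\varepsilon/(2k))$ with $\varepsilon$ below the slack $\eta$, the same observation that $\rho(f_i(x))\in E-E$ forces $\normT{\lambda_i\rho(f_i(x))}<\varepsilon/k$, and the same triangle-inequality absorption of the rounding error $K_\rho\sum_i|\lambda_i|$; the only cosmetic difference is that it runs the chain as a lower bound $\sum_i\normT{\lambda_i\rho(f_i(x))}>\varepsilon$ rather than as your upper bound on $\normT{\sum_i\lambda_if_i(n)}$.
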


\begin{remark}
Suppose some functions $f_1,\dots,f_k$ in a Hardy field satisfy the hypotheses of \Cref{Thm:C}. This means that $f(t)=\sum_{i}\lambda_if_i(t)$ is a Hardy field function satisfying 
$\liminf_{n\to\infty}\|f(n)\|_{\mathbb{T}}>0$, which can only happen if $f$ is of the form
\begin{equation*}
f(t)=p(t)+\kappa+o(1),
\end{equation*}
for some $p\in\mathbb{Q}[x]$ and $\kappa\in\mathbb{R}$. Otherwise, we would have $\lim_{t\to\infty}|f(t)-p(t)|=\infty$ for all $p\in\mathbb{Q}[x]$, which implies by \cite[Theorem 1.4]{Bos94} that $f(n)$ is dense mod $1$, a contradiction. 
\end{remark}

We have a simpler version of \Cref{Thm:C} if we allow a homothety constant $M>0$:
\begin{corollary}
\label{Thm:CHomothety}
Let $\rho:\R\to\Z$ be a rounding map, and suppose some functions $f_1,\dots,f_k:\mathbb{N}\to\mathbb{R}$ satisfy, for some $N\in\mathbb{N}$ and $\lambda_1,\dots,\lambda_k\in\mathbb{R}$, that
\begin{equation*}
\inf_{x\geq N}\left\|\sum_{i}\lambda_if_i(x)\right\|_{\mathbb{T}}>0.
\end{equation*}
Then there are $M>0$ and a basic Bohr set \(E\subset\N\) such that \(\cap_{i=1}^k R_{\rho\circ(Mf_i)}(E)\cap [N,\infty)=\varnothing\).
\end{corollary}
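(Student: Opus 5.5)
The plan is to reduce the corollary to \Cref{Thm:C} applied to the rescaled family $Mf_1,\dots,Mf_k$ with the same coefficients $\lambda_1,\dots,\lambda_k$. Write $F(x)=\sum_i\lambda_if_i(x)$ and $\delta:=\inf_{x\ge N}\normT{F(x)}>0$. For the rescaled functions $\sum_i\lambda_i(Mf_i)(x)=MF(x)$, so hypothesis \eqref{5te9rpdfol} for the family $Mf_i$ reads
\[
\inf_{x\ge N}\normT{MF(x)}>K_\rho\sum_i|\lambda_i|.
\]
The difficulty is that multiplying by an integer $M$ does not preserve a lower bound on $\normT{\cdot}$: $\normT{MF(x)}$ could be $0$ when $F(x)$ is a rational with denominator dividing $M$. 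The key freedom is that $M$ need only be a positive real, not an integer.

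The step I would carry out first is to choose $M$ small rather than large, together with rescaling the coefficients. Note that the coefficients in \Cref{Thm:C} are free, so I would apply it with coefficients $\mu_i:=\lambda_i/M$. Then $\sum_i\mu_i(Mf_i)=F$, and the hypothesis becomes
\[
\inf_{x\ge N}\normT{F(x)}=\delta> K_\rho\sum_i\frac{|\lambda_i|}{M},
\]
which holds as soon as $M>K_\rho\sum_i|\lambda_i|/\delta$. (If all $\lambda_i=0$ the hypothesis $\delta>0$ fails, since $\normT{0}=0$, so $\sum_i|\lambda_i|>0$ and this is a genuine finite threshold; if $K_\rho=0$, any $M>0$ works.) So any real $M$ exceeding this threshold, in particular a large integer, suffices.

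With this choice, \Cref{Thm:C}, applied to the functions $Mf_1,\dots,Mf_k$ and coefficients $\mu_1,\dots,\mu_k$, produces a basic Bohr set $E$ with $\bigcap_i R_{\rho\circ(Mf_i)}(E)\cap[N,\infty)=\varnothing$, which is exactly the claim. There is no real obstacle. The only point to notice is that one should rescale the $\lambda_i$ inversely to $M$ instead of keeping them fixed, because keeping them fixed leads to the problematic quantity $\normT{MF}$.
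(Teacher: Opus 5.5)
Your proposal is correct and is essentially the paper's proof: apply \Cref{Thm:C} to $Mf_1,\dots,Mf_k$ with coefficients $\lambda_i/M$. This leaves the linear combination equal to $\sum_i\lambda_if_i$, while the threshold $K_\rho\sum_i|\lambda_i|/M$ falls below $\inf_{x\ge N}\normT{\sum_i\lambda_if_i(x)}$ once $M$ is large. (Minor wording slip: you say ``choose $M$ small rather than large,'' but your own computation, like the paper's, requires $M$ large. Also, the observation that $M$ may be non-integer is never actually used.)
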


We now use the theorems above to answer the two questions of \cite{BMR24}. Recall that a subset of $\mathbb{N}$ is \emph{syndetic} if it
has bounded gaps, and \emph{piecewise syndetic} if it is the intersection of a
thick set and a syndetic set.

\begin{example}[Negative answer to \Cref{ques:Q66}]\label{exa:C}
Let $\lambda\in\R\setminus\Q$ and put $h_1(t)=t^{3/2}$, $h_2(t)=\lambda t^{3/2}+t$.  Then every $F\in\nablaz(h_1,h_2)$ satisfies $\lim_{t\to\infty}|F(t)|\in\{0,\infty\}$. However, there exists $E\subset\N$ of positive natural density such that $R_{\lfloor h_1\rfloor}(E)\cap R_{\lfloor h_2\rfloor}(E)$ is piecewise syndetic but not thick. The same holds with $\floor{\cdot}$ replaced by $\ceil{\cdot}$ or $\round{\cdot}$.
\end{example}

\begin{example}[Empty return-time obstruction]\label{exp:D}
Let $\lambda\in\R\setminus\Q$, let $\xi\in\R\setminus\Z$, and choose $Q\in\N$ with $Q\normT{\xi}>1+|\lambda|$.  Put
\[
g_1(t)=t^{3/2},\qquad g_2(t)=\lambda t^{3/2}+Q(t+\xi).
\]
Then every $F\in\nablaz(g_1,g_2)$ satisfies $\lim_{t\to\infty}|F(t)|\in\{0,\infty\}$.  Nevertheless, there exists a basic Bohr set \(E\subset\N\) such that
\[
R_{\lfloor g_1\rfloor}(E)\cap R_{\lfloor g_2\rfloor}(E)=\varnothing.
\]
The same holds with $\floor{\cdot}$ replaced by $\ceil{\cdot}$ or $\round{\cdot}$.
\end{example}
For \(\xi=\sqrt2\), the pair \((g_1,g_2)\) in \Cref{exp:D} satisfies
\(\mathcal P_{\mathbb Z}(g_1,g_2)=\varnothing\), so it already answers
\Cref{ques:Q65} negatively, as the empty polynomial family is jointly intersective.
If we want to find a `non-trivial' negative answer to \Cref{ques:Q65} where the jointly intersective family is non-empty, it is enough to adjoin an integer polynomial to the family, such as \(g_3(t)=t^2\):

\begin{example}[Negative answer to \Cref{ques:Q65}]\label{exa:E}
Let $\lambda\in\R\setminus\Q$, and choose $Q\in\N$ with $Q\normT{\sqrt2}>1+|\lambda|$.  Put
\[
h_1(t)=t^{3/2},\qquad h_2(t)=\lambda t^{3/2}+Q(t+\sqrt2),\qquad h_3(t)=t^2.
\]
Then
\[
\mathcal P_{\Z}(h_1,h_2,h_3)=\{ct^2:c\in\Z,\ c\ne0\},
\]
which is jointly intersective.  Furthermore, there exists a basic Bohr set \(E\subset\N\) such that
\[
R_{\round{h_1}}(E)\cap R_{\round{h_2}}(E)\cap R_{\round{h_3}}(E)=\varnothing.
\]
\end{example}

\section{Preliminaries}\label{sec:prelim}

\subsection{Basic Bohr sets}  

\begin{lemma}\label{lem:bohr-positive-density}
Let \(v_1,\dots,v_d\in\T\) and \(\varepsilon>0\). Put
\[
B(v_1,\dots,v_d;\varepsilon)
=
\{n\in\N:\normT{nv_j}<\varepsilon \text{ for }1\le j\le d\}.
\]
Then \(B(v_1,\dots,v_d;\varepsilon)\cap\N\) has positive natural density and is syndetic.
\end{lemma}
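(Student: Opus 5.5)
We argue on the torus via the orbit of the rotation $x\mapsto x+v$ on $\T^d$, where $v=(v_1,\dots,v_d)$. Let $H=\overline{\{nv:n\in\Z\}}$. This is a closed subgroup of $\T^d$, and the rotation by $v$ on $H$ is minimal and uniquely ergodic with respect to the Haar measure $m_H$ on $H$; we take this as standard (Kronecker--Weyl). Set
\[
U=\{x\in\T^d:\normT{x_j}<\varepsilon \text{ for } 1\le j\le d\}.
\]
This is an open neighbourhood of $0$ in $\T^d$, and $n\in B(v_1,\dots,v_d;\varepsilon)$ if and only if $nv\in U$.

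\textbf{Positive density.} Since $0\in H$, the set $U\cap H$ is a nonempty relatively open subset of $H$, so $m_H(U\cap H)>0$; Haar measure charges every nonempty open set. Choose a continuous $\phi:H\to[0,1]$ supported in $U\cap H$ with $\int\phi\,dm_H>0$. For instance, take $\phi(x)=\max\bigl(0,1-2\max_j\normT{x_j}/\varepsilon\bigr)$, which equals $1$ at $0$ and vanishes outside the $\varepsilon/2$-box. Unique ergodicity gives
\[
\frac1N\sum_{n=1}^N\phi(nv)\to\int\phi\,dm_H>0 .
\]
Since $\phi\le 1_U$, this yields $\underline d(B)>0$.

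This only gives a lower bound on the lower density, while the statement asks for the natural density to exist. To get that, we use that $U\cap H$ is open and its boundary in $H$ lies in the set $\{x:\normT{x_j}=\varepsilon \text{ for some } j\}$, which is a finite union of translates of subtori intersected with $H$. The key step, and the one I expect to be the only delicate point, is to show that this boundary is $m_H$-null.

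\textbf{The boundary is null.} Each coordinate map $\pi_j:H\to\T$ is a continuous homomorphism. Its image is a closed subgroup of $\T$, so it is either all of $\T$ or finite cyclic. In the first case the pushforward of $m_H$ is Lebesgue measure, so $\{\normT{x_j}=\varepsilon\}$ is null. In the second case the image is $\frac1q\Z/\Z$ for some $q$, and the level set has positive measure only if $\pm\varepsilon\in\frac1q\Z$. That case can genuinely occur, e.g.\ $v_1=1/2$, $\varepsilon=1/2$. Even then the sequence $\normT{nv_1}$ is periodic in $n$, and the condition $\normT{nv_1}<\varepsilon$ defines a periodic set of $n$ that is a union of residue classes. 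So we handle finite-image coordinates separately: they cut out a finite union of residue classes modulo some $q$, and we restrict to $n\equiv r \pmod q$.

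After this restriction, the remaining coordinates have Lebesgue pushforward on the subgroup generated by $qv$ (or are again periodic, and can be absorbed). On each class the standard Portmanteau argument, approximating $1_U$ from above and below by continuous functions using the null boundary, gives the existence of the density. Summing over the residue classes shows that $d(B)$ exists. It is positive by the previous step.

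\textbf{Syndeticity.} By minimality of the rotation on $H$ and compactness, the open cover $\{U\cap H-nv:n\in\N\}$ of $H$ has a finite subcover, say indexed by $n\le L$. Hence for every $m$, the point $mv$ lies in $U-nv$ for some $n\le L$. That is, $(m+n)v\in U$, so $m+n\in B$, which shows $B$ has gaps bounded by $L$.

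Alternatively, and more simply, this follows from the difference trick. Let $B'=B(v;\varepsilon/2)$, which has positive lower density by the first step. Then $B'-B'\subseteq B\cup(-B)\cup\{0\}$, and a set of positive density has syndetic difference set. I would use the compactness argument, since it is self-contained.
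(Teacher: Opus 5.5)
Your proof is correct, and its backbone is the same as the paper's: the orbit closure $H$, unique ergodicity of the rotation, a null relative boundary, and a Portmanteau sandwich between continuous functions. The two arguments differ in two places.

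\textbf{Finite-image coordinates.} You split $\N$ into residue classes modulo $q$ because the level set $\{\normT{x_j}=\varepsilon\}$ can have positive $m_H$-measure. The paper sidesteps this. What has to be null is only the boundary of $U\cap H$ relative to $H$. If $\pi_j(H)$ is finite, then $\{y\in\pi_j(H):\normT{y}<\varepsilon\}$ is clopen in $\pi_j(H)$. Its preimage is therefore clopen in $H$ and contributes nothing to $\partial_H(U\cap H)$.

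In your example $v_1=1/2$, $\varepsilon=1/2$, the level set $\pi_1^{-1}(\{1/2\})$ is open in $H$ and disjoint from $\overline{U\cap H}$, so it is not part of the boundary at all. Hence $\partial_H(U\cap H)$ is always $m_H$-null. A single Portmanteau argument then gives $d(B)=m_H(U\cap H)>0$ directly, which also makes your separate lower-density step with $\phi$ unnecessary.

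Your detour is nonetheless valid. Take $q$ to be a common denominator of all rational $v_j$ at once. On each class $n=r+qm$ the rational coordinates are frozen, and the irrational ones have Lebesgue marginals on the coset $rv+\overline{\{mqv:m\in\Z\}}$. In particular, your parenthetical about further periodic coordinates is vacuous.

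\textbf{Syndeticity.} You use minimality and a finite subcover. The paper instead uses uniform convergence of Birkhoff averages of a continuous minorant of $\mathbf{1}_U$ in the uniquely ergodic system. Your argument is purely topological and gives an explicit gap bound. Note that it needs density of forward orbits $\{nv:n\in\N\}$, not just full orbits. This holds here because the closure of a forward orbit in a compact group is a closed subsemigroup, hence a subgroup containing $v$, hence all of $H$.
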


\begin{proof}
  Put \(\omega=(v_1,\dots,v_d)\in\T^d\), and let \(K=\overline{\{n\omega:n\in\Z\}}\subset\T^d\). As a compact Abelian group, \(K\) supports a unique Haar probability measure \(m_K\) for which the rotation \(x\mapsto x+\omega\) is uniquely ergodic. Let \(\pi_j\colon K\to\T\) be the \(j\)-th coordinate projection, whose image \(H_j = \pi_j(K)\) is a closed subgroup of \(\T\).

  Define \(V_j = \{y\in H_j : \normT{y}<\varepsilon\}\) and 
  \[
  U = \bigcap_{j=1}^d \pi_j^{-1}(V_j) = \{x\in K:\normT{x_j}<\varepsilon \text{ for }1\le j\le d\}.
  \]
  Since \(U\) is a non-empty relatively open neighborhood of \(0\) in \(K\), we have \(m_K(U)>0\). Note that \(B(v_1,\dots,v_d;\varepsilon)=\{n\in\N:n\omega\in U\}\).

  We next show that the relative boundary \(\partial_K U\) has \(m_K\)-measure zero. By elementary topology,
  \[
  \partial_K U \subset \bigcup_{j=1}^d \partial_K\bigl(\pi_j^{-1}(V_j)\bigr) \subset \bigcup_{j=1}^d \pi_j^{-1}\bigl(\partial_{H_j} V_j\bigr).
  \]
  The first inclusion holds because the boundary of a finite intersection lies in the union of the boundaries. The second follows from the continuity of \(\pi_j\), which guarantees \(\partial(f^{-1}(V)) \subset f^{-1}(\partial V)\) via standard properties of preimages of closures and interiors. Since the continuous, surjective group homomorphism \(\pi_j\) pushes \(m_K\) forward to the Haar measure \(m_{H_j}\), we have \(m_K\bigl(\pi_j^{-1}(\partial_{H_j} V_j)\bigr) = m_{H_j}(\partial_{H_j} V_j)\).

As a closed subgroup, \(H_j\) is either finite (making \(V_j\) clopen, so \(\partial_{H_j} V_j = \varnothing\)) or equal to \(\T\), in which case \(\partial_{\T} V_j\) is finite. In both cases, \(m_{H_j}(\partial_{H_j} V_j) = 0\), which forces \(m_K(\partial_K U) = 0\).

  Because \(m_K(\partial_K U) = 0\), the indicator function \(\mathbf{1}_U\) can be tightly approximated by continuous functions, that is, for any \(\delta>0\), there exist continuous functions \(f, g \in C(K)\) such that \(0 \le f \le \mathbf{1}_U \le g \le 1\) and \(\int_K g \, dm_K - \int_K f \, dm_K < \delta\). By unique ergodicity, the sequence of empirical measures \(\mu_N = \frac{1}{N}\sum_{n=1}^N \delta_{n\omega}\) converges weakly to \(m_K\). Applying this weak convergence to the continuous functions \(f\) and \(g\) yields
  \[
  \int_K f \, dm_K = \lim_{N\to\infty} \int_K f \, d\mu_N 
  \le \liminf_{N\to\infty} \mu_N(U) 
  \le \limsup_{N\to\infty} \mu_N(U) 
  \le \lim_{N\to\infty} \int_K g \, d\mu_N 
  = \int_K g \, dm_K.
  \]
   Since both integrals are within \(\delta\) of \(m_K(U)\) and \(\delta>0\) is arbitrary, the upper and lower limits coincide with \(m_K(U)\). Therefore,
 \[
 \lim_{N\to\infty}\frac{1}{N}
  |\{1\le n\le N:n\omega\in U\}|
  = \lim_{N\to\infty} \mu_N(U) = m_K(U) > 0.
  \]
   This proves that \(B(v_1,\dots,v_d;\varepsilon)\) has positive natural density. 
It remains to prove syndeticity. If \(B(v_1,\dots,v_d;\varepsilon)\) were not syndetic, then there would be intervals \(\{k_N+1,\dots,k_N+N\}\) disjoint from it. On the other hand, the same continuous approximation argument, together with the uniform convergence of Birkhoff averages in the uniquely ergodic rotation \(x\mapsto x+\omega\) on \(K\), gives
\[
\frac{1}{N}|\{1\le n\le N:(k_N+n)\omega\in U\}|\to m_K(U)>0,
\]
a contradiction.
\end{proof}

\subsection{Hardy fields}
A Hardy field is a field of germs at $+\infty$ of real-valued functions closed under differentiation.  All finite families below lie in a common Hardy field.  We use the following standard facts: every Hardy-field function is eventually monotone; if $f$ has polynomial growth, then $f^{(m)}(t)\to0$ for all sufficiently large $m$; and if a Hardy-field function has a finite limit, then its derivative tends to $0$.  See \cite[Section~2.2]{BMR24}.

\subsection{Finite-window representation of derivatives}

\begin{lemma}\label{lem:diff-iter}
Let $d\in\N$ and let $f:(0,\infty)\to\mathbb{R}$ be $C^d$ with $f^{(d)}(x)\to0$ as $x\to\infty$.  For each $1\le\ell\le d-1$ there exist rationals $b_{\ell,0},\ldots,b_{\ell,d-1}$, depending only on $d$ and $\ell$, such that
\[
f^{(\ell)}(x)=\sum_{j=0}^{d-1}b_{\ell,j}f(x+j)+o(1).
\]
\end{lemma}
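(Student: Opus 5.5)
We use Taylor expansion with remainder of order $d$ and invert a Vandermonde system. For $0\le j\le d-1$ and large $x$, Taylor's theorem with Lagrange remainder gives
\[
f(x+j)=\sum_{m=0}^{d-1}\frac{j^m}{m!}f^{(m)}(x)+\frac{j^d}{d!}f^{(d)}(\xi_{j,x}),\qquad \xi_{j,x}\in[x,x+j].
\]
Since $f^{(d)}(t)\to0$ and $\xi_{j,x}\ge x$, each remainder is $o(1)$ as $x\to\infty$, uniformly in $j$ (there are only $d$ values of $j$). Hence
\[
\bigl(f(x+j)\bigr)_{j=0}^{d-1}=V\bigl(f^{(m)}(x)/m!\bigr)_{m=0}^{d-1}+o(1),
\]
where $V=(j^m)_{0\le j,m\le d-1}$ is the Vandermonde matrix on the distinct nodes $0,1,\dots,d-1$ (with $0^0=1$).

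The matrix $V$ has integer entries and determinant $\prod_{0\le i<j\le d-1}(j-i)\neq0$. Therefore $V^{-1}$ has rational entries depending only on $d$. Apply $V^{-1}$ to both sides. Multiplying a vector that is $o(1)$ by a fixed matrix keeps it $o(1)$, so
\[
\frac{f^{(\ell)}(x)}{\ell!}=\sum_{j=0}^{d-1}(V^{-1})_{\ell,j}f(x+j)+o(1).
\]
Set $b_{\ell,j}=\ell!\,(V^{-1})_{\ell,j}\in\Q$. These depend only on $d$ and $\ell$, which gives the claim for every $0\le\ell\le d-1$, and in particular for $1\le\ell\le d-1$.

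Nothing here is seriously delicate. The only point to check is that the remainder is $o(1)$. Because $f$ is $C^d$, the Lagrange form is available, and only the limit $f^{(d)}\to0$ is used; no boundedness of the lower derivatives is needed, since those appear exactly in the linear system rather than in the error terms. One could also argue via finite differences, $\Delta^\ell f(x)=f^{(\ell)}(x)+\dots$, but the Vandermonde inversion gives the rational coefficients directly.
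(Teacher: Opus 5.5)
Your proof is correct and is essentially the paper's argument. Both expand $f(x+j)$ by Taylor to order $d-1$ with an $o(1)$ remainder coming from $f^{(d)}\to0$, then invert the rational Vandermonde system. The paper inverts $VS$ with $S=\operatorname{diag}(1,1/1!,\ldots,1/(d-1)!)$, while you invert $V$ and rescale by $\ell!$, which amounts to the same thing; your explicit Lagrange-remainder bound simply spells out the paper's ``$+o(1)$'' step.
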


\begin{proof}
For $h=0,\ldots,d-1$, Taylor's theorem gives
\[
f(x+h)=\sum_{p=0}^{d-1}\frac{h^p}{p!}f^{(p)}(x)+o(1),
\]
because $f^{(d)}(x)\to0$ and $h$ ranges over a fixed finite set.  Let
\[
\mathbf F(x)=(f(x),f(x+1),\ldots,f(x+d-1))^{\mathsf T},\qquad
\mathbf D(x)=(f(x),f'(x),\ldots,f^{(d-1)}(x))^{\mathsf T}.
\]
Then $\mathbf F(x)=VS\mathbf D(x)+o(1)$, where $V=(h^p)_{0\le h,p\le d-1}$ is a Vandermonde matrix and $S=\operatorname{diag}(1,1/1!,\ldots,1/(d-1)!)$.  Since $VS$ is invertible over $\Q$, the result follows by reading off the $\ell$-th component of $(VS)^{-1}\mathbf F(x)+o(1)$.
\end{proof}

The proof of \Cref{lem:diff-iter} implies that, for \(C^d\) functions \(f\) satisfying \(f^{(d)}\to 0\), we have
\begin{equation*}
\operatorname{span}_{\R}\left(f,f',f^{(2)},\dots,f^{(d-1)}\right)=_{\textup{mod }o(1)}
\operatorname{span}_{\R}\left(\{f(x+n);n\in\mathbb{Z}\}\right),
\end{equation*}
meaning that each function in the left-hand side differs by \(o(1)\) from a function in the right-hand side, and vice versa. The idea behind \Cref{DerivvsIterated1} of the following result is that, if we denote $\Delta f(x)=f(x+1)-f(x)$, then we also have
\begin{equation*}
\operatorname{span}_{\R}\left(f',f^{(2)},\dots,f^{(d-1)}\right)=_{\textup{mod }o(1)}
\operatorname{span}_{\R}\left(\{\Delta f(x+n);n\in\mathbb{Z}\}\right).
\end{equation*}
Similar statements hold for higher order derivatives and iterated differences, but we will not need them here.

\begin{lemma}[Window lemma]\label{lem:window}
Let $f_1,\ldots,f_k$ be of polynomial growth from a Hardy field.  Let $m\in\N$ be such that $f_i^{(m)}(t)\to0$ for every $i$.  Suppose
\[
g(t)=\sum_{i=1}^{k}\sum_{j=0}^{m-1}c_{i,j}f_i^{(j)}(t)\to L.
\]
Then the following hold.
\begin{enumerate}[label=\textup{(\alph*)}]
\item There exist real coefficients $\gamma_{i,q}$, $0\le q\le m-1$, such that
\[
\Psi(t)=\sum_{i=1}^{k}\sum_{q=0}^{m-1}\gamma_{i,q}f_i(t+q)=g(t)+o(1)\to L.
\]
\item\label{DerivvsIterated1} If the chosen representation has $c_{i,0}=0$ for every $i$, then there exist real coefficients $\widetilde\gamma_{i,q}$, $0\le q\le m-2$, such that
\[
\widetilde f(t)=\sum_{i=1}^{k}\sum_{q=0}^{m-2}\widetilde\gamma_{i,q}f_i(t+q)
\]
satisfies $\Delta\widetilde f(t)=g(t)+o(1)\to L$.
\end{enumerate}
\end{lemma}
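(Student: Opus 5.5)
Part (a) follows directly from \Cref{lem:diff-iter}, applied to each $f_i$ with $d=m$. Since every $f_i$ is a Hardy-field function of polynomial growth, it is smooth on a neighbourhood of infinity (we may restrict to a ray $(t_0,\infty)$, which affects nothing), and $f_i^{(m)}\to0$ by hypothesis. Hence for each $0\le j\le m-1$ we have rationals $b_{j,q}$ (with $b_{0,q}=\delta_{0,q}$) such that
\[
f_i^{(j)}(t)=\sum_{q=0}^{m-1}b_{j,q}f_i(t+q)+o(1).
\]
The case $j=0$ is trivial. Substituting into $g$ and setting $\gamma_{i,q}=\sum_j c_{i,j}b_{j,q}$ gives $\Psi=g+o(1)$; since $g\to L$, also $\Psi\to L$. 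The only care needed is that the finitely many $o(1)$ errors add up to $o(1)$.

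For part (b), I would not reuse (a) verbatim but apply the same Vandermonde argument to the differences $\Delta f_i$. The key observation is that $h_i:=\Delta f_i$ satisfies, by the mean value theorem, $h_i^{(m-1)}(t)=f_i^{(m-1)}(t+1)-f_i^{(m-1)}(t)=f_i^{(m)}(\xi)\to0$. Taylor expansion of $f_i(t+1)$ to order $m-1$ gives
\[
\Delta f_i(t)=\sum_{p=1}^{m-1}\frac{f_i^{(p)}(t)}{p!}+o(1).
\]
Applying this at $t+h$ for $h=0,\dots,m-2$ and expanding each $f_i^{(p)}(t+h)$ by Taylor (again with error $o(1)$, since $f_i^{(m)}\to0$), the vector $(\Delta f_i(t+h))_{h=0}^{m-2}$ equals $A\,(f_i'(t),\dots,f_i^{(m-1)}(t))^{\mathsf T}+o(1)$ for an explicit $(m-1)\times(m-1)$ rational matrix $A$.

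Instead of computing $A$ directly, I would apply \Cref{lem:diff-iter} to $F_i:=f_i'$ with $d=m-1$ (valid because $F_i^{(m-1)}=f_i^{(m)}\to0$). This expresses each $f_i^{(j)}$ with $1\le j\le m-1$ as $\sum_{q=0}^{m-2}\beta_{j,q}f_i'(t+q)+o(1)$; the case $j=1$ is trivial and the cases $j\ge2$ are the lemma with $\ell=j-1$. Next, $f_i'(s)$ must be replaced by a finite combination of differences. The cleanest route is to observe that $f_i'$ lies, modulo $o(1)$, in $\operatorname{span}\{\Delta f_i(t+q):0\le q\le m-2\}$, which is exactly the invertibility of $A$. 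Invertibility holds because $A$ factors as (Vandermonde in $h$) times (an upper-triangular matrix with diagonal $1/p!$): $f^{(p)}(t+h)=\sum_{r\ge p}\frac{h^{r-p}}{(r-p)!}f^{(r)}(t)$, and $\Delta$ corresponds to the operator $e^{D}-1=D(1+D/2+\cdots)$, whose truncation is invertible on the span of $D,\dots,D^{m-1}$ acting on $f_i$.

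Granting invertibility of $A$, we get $f_i^{(j)}(t)=\sum_{q=0}^{m-2}a_{j,q}\Delta f_i(t+q)+o(1)$ for $1\le j\le m-1$. Since $c_{i,0}=0$, setting $\widetilde\gamma_{i,q}=\sum_{j\ge1}c_{i,j}a_{j,q}$ and using linearity and shift-commutation of $\Delta$ gives $\Delta\widetilde f=\sum_{i,q}\widetilde\gamma_{i,q}\Delta f_i(t+q)=g+o(1)$. The main obstacle is this invertibility step together with the bookkeeping of error terms through the nested Taylor expansions. The bookkeeping is harmless because all derivatives of order at least $m$ tend to $0$ (Hardy-field derivatives of functions tending to $0$ also tend to $0$), so every remainder is $o(1)$.
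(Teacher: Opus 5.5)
Your proposal is correct, and part (a) is the paper's argument. Part (b), however, follows a genuinely different route.

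The paper never sets up a new linear system for the differences. It reuses the coefficients $b_{j,q}$ from part (a) and observes that $\sum_{q=0}^{m-1}b_{j,q}=0$ for $j\ge1$, which follows by applying \Cref{lem:diff-iter} to a constant function. A zero-sum coefficient vector is the discrete derivative of its tail sums $\beta_{j,q}=\sum_{s>q}b_{j,s}$. Hence $\Delta\bigl(\sum_{q=0}^{m-2}\beta_{j,q}f_i(t+q)\bigr)=\sum_{q=0}^{m-1}b_{j,q}f_i(t+q)$ holds \emph{exactly}, so no new Taylor expansions or error bookkeeping are needed.

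You instead expand the shifted differences $\Delta f_i(t+h)$, $0\le h\le m-2$, in terms of $f_i',\dots,f_i^{(m-1)}$ and invert the resulting matrix. This costs an invertibility check, but it proves more. You get the two-sided equality $\operatorname{span}(f_i',\dots,f_i^{(m-1)})=\operatorname{span}\{\Delta f_i(t+q)\}$ modulo $o(1)$, which the paper only mentions informally as motivation before the lemma. The paper's trick gives only the one inclusion it needs.

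Your invertibility argument is sound, and it can be said more directly. Since $A_{h,r}=\bigl((h+1)^r-h^r\bigr)/r!$ is a polynomial in $h$ of exact degree $r-1$, $A$ is a Vandermonde matrix at the distinct nodes $0,\dots,m-2$ times an upper-triangular matrix with nonzero diagonal entries $1/(r-1)!$.

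Your operator heuristic should be read as invertibility of the unit factor $(e^D-1)/D=1+D/2+\cdots$. The operator $e^D-1$ itself is nilpotent on that span.

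Finally, once $A$ is known to be invertible, the detour through \Cref{lem:diff-iter} applied to $f_i'$ is redundant. $A^{-1}$ directly yields $f_i^{(j)}=\sum_{q}a_{j,q}\Delta f_i(t+q)+o(1)$ for $1\le j\le m-1$.
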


\begin{proof}
Part (a) follows by substituting the expressions from \Cref{lem:diff-iter} into $g$ and collecting the coefficients of the shifts $f_i(t+q)$.

For (b), only positive-order derivatives occur.  For $j\ge1$, the coefficients $b_{j,q}$ in \Cref{lem:diff-iter} satisfy $\sum_{q=0}^{m-1}b_{j,q}=0$, as follows by applying the identity to a constant function.  Define $\beta_{j,q}=\sum_{s=q+1}^{m-1}b_{j,s}$ for $0\le q\le m-2$.  Then
\[
\Delta\left(\sum_{q=0}^{m-2}\beta_{j,q}f_i(t+q)\right)=\sum_{q=0}^{m-1}b_{j,q}f_i(t+q).
\]
Summing with coefficients $c_{i,j}$ gives the required $\widetilde f$.
\end{proof}

\section{Proof of the main results}

The following notation will be convenient in our arguments. For $L>0$ and $x\in\mathbb{R}$, denote 
\begin{equation*}
\left\|x\right\|_{\mathbb{R}/L\mathbb{Z}}:=\min\{|Ln-x|;n\in\mathbb{Z}\}
=L\left\|\frac{x}{L}\right\|_{\mathbb{T}}.
\end{equation*}
Note that $\|\cdot\|_{\mathbb{R}/L\mathbb{Z}}$ satisfies the triangle inequality, in the sense that for all $x,y,z\in\mathbb{R}$,
\begin{equation*}
\left\|x-z\right\|_{\mathbb{R}/L\mathbb{Z}}\leq
\left\|x-y\right\|_{\mathbb{R}/L\mathbb{Z}}
+
\left\|y-z\right\|_{\mathbb{R}/L\mathbb{Z}}.
\end{equation*}

During this section, let \(\rho:\R\to\Z\) be a rounding map, and put $K_\rho:=\sup_{x\in\R}|x-\rho(x)|<\infty .$

\begin{proposition}
\label{CoreEstimate}
Let $L>0,N,a_1,\dots,a_k,\lambda_1,\dots,\lambda_k\in\mathbb{R}$. Then
\begin{equation*}
\left\|\sum_{i=1}^k\lambda_i\rho( a_i)\right\|_{\mathbb{R}/L\mathbb{Z}}
\geq \|N\|_{\mathbb{R}/L\mathbb{Z}}-\left(
K_\rho\sum_{i=1}^k|\lambda_i|+\left|N-\sum_{i=1}^k\lambda_ia_i\right|\right)
\end{equation*}
\end{proposition}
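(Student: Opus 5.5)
The plan is a direct application of the triangle inequality for $\|\cdot\|_{\mathbb{R}/L\mathbb{Z}}$ recorded just before the statement, together with the elementary bound $\|x\|_{\mathbb{R}/L\mathbb{Z}}\le|x|$. The latter holds because the choice $n=0$ in the definition $\min\{|Ln-x|;n\in\mathbb{Z}\}$ gives $\|x\|_{\mathbb{R}/L\mathbb{Z}}\le |x|$.

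Write $S=\sum_{i}\lambda_i\rho(a_i)$ and $T=\sum_i\lambda_ia_i$. I insert the two intermediate points $T$ and $N$ via the triangle inequality:
\[
\|N\|_{\mathbb{R}/L\mathbb{Z}}\le \|N-T\|_{\mathbb{R}/L\mathbb{Z}}+\|T-S\|_{\mathbb{R}/L\mathbb{Z}}+\|S\|_{\mathbb{R}/L\mathbb{Z}}.
\]
Formally, this uses the stated inequality with $z=0$ twice, or equivalently the subadditivity $\|x+y\|\le\|x\|+\|y\|$, which follows by taking $z=-y$, $y\mapsto 0$.

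Each of the first two terms is then bounded by its absolute value. The first satisfies $\|N-T\|_{\mathbb{R}/L\mathbb{Z}}\le|N-T|$. For the second,
\[
\|T-S\|_{\mathbb{R}/L\mathbb{Z}}\le\Bigl|\sum_i\lambda_i\bigl(a_i-\rho(a_i)\bigr)\Bigr|\le\sum_i|\lambda_i|\,|a_i-\rho(a_i)|\le K_\rho\sum_i|\lambda_i|,
\]
where the last step is the definition of $K_\rho$. Rearranging the resulting inequality gives exactly the claim.

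There is no real obstacle here. The only point to check is that the triangle inequality for the quotient pseudo-norm is applied in the correct orientation. This is immediate, since $\|x\|_{\mathbb{R}/L\mathbb{Z}}=\|-x\|_{\mathbb{R}/L\mathbb{Z}}$ by the symmetry of the set $L\mathbb{Z}$.
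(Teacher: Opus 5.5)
Your proposal is correct and is essentially the paper's proof: the same three-term decomposition $N=(N-T)+(T-S)+S$, the triangle inequality for $\|\cdot\|_{\mathbb{R}/L\mathbb{Z}}$, and the bound $\|x\|_{\mathbb{R}/L\mathbb{Z}}\le|x|$. The only cosmetic difference is that you bound the middle term $\sum_i\lambda_i(a_i-\rho(a_i))$ as a single block, whereas the paper first splits it into $k$ summands.
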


\begin{proof}
Consider the following equality.
\begin{equation*}
N=\left(N-\sum_{i=1}^k\lambda_ia_i\right)+\sum_{i=1}^k\lambda_i(a_i-\rho(a_i))+\sum_{i=1}^k\lambda_i\rho(a_i).
\end{equation*}
Passing to $\mathbb{R}/L\mathbb{Z}$ and using the triangle inequality and $\|x\|_{\mathbb{R}/L\mathbb{Z}}\leq|x|$, we conclude:
\begin{align*}
\|N\|_{\mathbb{R}/L\mathbb{Z}}&
\leq\left\|N-\sum_{i=1}^k\lambda_ia_i\right\|_{\mathbb{R}/L\mathbb{Z}}+\sum_{i=1}^k\left\|\lambda_i(a_i-\rho(a_i))\right\|_{\mathbb{R}/L\mathbb{Z}}+\left\|\sum_{i=1}^k\lambda_i\rho(a_i)\right\|_{\mathbb{R}/L\mathbb{Z}}\\
&\leq
\left|N-\sum_{i=1}^k\lambda_ia_i\right|+K_\rho\sum_{i=1}^k|\lambda_i|+\left\|\sum_{i=1}^k\lambda_i\rho(a_i)\right\|_{\mathbb{R}/L\mathbb{Z}}.\qedhere
\end{align*}
\end{proof}

It will now be convenient to consider basic Bohr sets in the integers, which we denote as 
\[
B_{\mathbb{Z}}(v_1,\dots,v_d;\varepsilon)
:=
\{n\in\mathbb{Z}\>:\normT{nv_j}<\varepsilon \text{ for }1\le j\le d\}.
\]

\begin{proposition}
\label{LimitBohrObstruction}
Given functions $f_1,\dots,f_k:(0,\infty)\to\mathbb{R}$, suppose that for some $\lambda_i\in\mathbb{R}$ 
the following limit exists and is positive:
\begin{equation*}
L:=\lim_{x\to\infty}\sum_i\lambda_if_i(x)\in(0,\infty).
\end{equation*}
If $K_\rho\sum_i|\lambda_i|<L$, then there is a basic Bohr set $B\subset\Z$ such that, for big enough $x$, we do not have $\rho(f_i(x))\in B$ for all $i=1,\dots,k$. In particular, for any $0<\varepsilon<L-K_\rho\sum_i|\lambda_i|$ we can let 
\begin{equation*}
B=\left\{n\in\Z;\|\lambda_in\|_{\mathbb{R}/2L\mathbb{Z}}<\frac{\varepsilon}{k}\textup{ for all }i=1,\dots,k\right\}.
\end{equation*}
\end{proposition}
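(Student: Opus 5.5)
Fix $0<\varepsilon<L-K_\rho\sum_i|\lambda_i|$ and let $B$ be the set in the statement. The plan is to apply \Cref{CoreEstimate} with modulus $2L$ and the base point $N$ taken to be $L$.

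First we check that $B$ is a basic Bohr set. We have $\|\lambda_i n\|_{\mathbb{R}/2L\mathbb{Z}}=2L\|n\lambda_i/(2L)\|_{\mathbb{T}}$. Hence
\[
B=B_{\mathbb{Z}}\bigl(\lambda_1/(2L),\dots,\lambda_k/(2L);\varepsilon/(2Lk)\bigr),
\]
which is a basic Bohr set. It contains $0$.

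Next, choose $x_0$ such that $\left|L-\sum_i\lambda_if_i(x)\right|<\delta$ for all $x\ge x_0$, where $\delta:=L-K_\rho\sum_i|\lambda_i|-\varepsilon>0$. Apply \Cref{CoreEstimate} with $a_i=f_i(x)$, modulus $2L$ and $N=L$. Since $\|L\|_{\mathbb{R}/2L\mathbb{Z}}=L$, this gives
\[
\Bigl\|\sum_i\lambda_i\rho(f_i(x))\Bigr\|_{\mathbb{R}/2L\mathbb{Z}}\ \ge\ L-K_\rho\sum_i|\lambda_i|-\delta=\varepsilon .
\]

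Now suppose, for contradiction, that $\rho(f_i(x))\in B$ for every $i$, with $x\ge x_0$. Put $n_i=\rho(f_i(x))$. Membership in $B$ means $\|\lambda_j n_i\|_{\mathbb{R}/2L\mathbb{Z}}<\varepsilon/k$ for all $j$, and in particular for $j=i$. The triangle inequality then gives
\[
\Bigl\|\sum_i\lambda_in_i\Bigr\|_{\mathbb{R}/2L\mathbb{Z}}\le\sum_i\|\lambda_in_i\|_{\mathbb{R}/2L\mathbb{Z}}<k\cdot\frac{\varepsilon}{k}=\varepsilon.
\]
This contradicts the previous display, so for $x\ge x_0$ not all $\rho(f_i(x))$ lie in $B$.

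The only delicate points are choosing the modulus $2L$, so that $L$ sits at maximal distance from $0$, and noting that we only use the diagonal condition on $\lambda_i n_i$. The conditions with $j\neq i$ are redundant but harmless, and they make $B$ a single Bohr set that does not depend on $i$. No real obstacle is expected.
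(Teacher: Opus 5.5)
Your proof is correct and is essentially the paper's argument: apply \Cref{CoreEstimate} with modulus $2L$ and base point $N=L$, then combine the diagonal conditions $\|\lambda_i\rho(f_i(x))\|_{\mathbb{R}/2L\mathbb{Z}}<\varepsilon/k$ with the triangle inequality to get a contradiction. Your explicit identification $B=B_{\mathbb{Z}}\bigl(\lambda_1/(2L),\dots,\lambda_k/(2L);\varepsilon/(2Lk)\bigr)$ spells out a detail that the paper leaves implicit.
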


\begin{proof}
For big enough $x$, it follows from \Cref{CoreEstimate} that
\begin{equation*}
\sum_{i=1}^k\|\lambda_i\rho(f_i(x))\|_{\mathbb{R}/2L\mathbb{Z}}
\geq
\left\|\sum_{i=1}^k\lambda_i\rho(f_i(x))\right\|_{\mathbb{R}/2L\mathbb{Z}}
\geq
L-K_\rho\sum_i|\lambda_i|
-
\left|L-\sum_i\lambda_if_i(x)\right|
>\varepsilon.
\end{equation*}
This is a contradiction if $\|\lambda_i\rho(f_i(x))\|_{\mathbb{R}/2L\mathbb{Z}}<\frac{\varepsilon}{k}$ for all $i$, so we are done.
\end{proof}

\begin{corollary}
\label{LimitBohrObstructionWithTranslates}
Given functions \(f_1,\dots,f_k:(0,\infty)\to\mathbb R\) and \(m\in\mathbb N\), suppose that for some constants \(\lambda_{i,j}\in\mathbb R\), \(1\le i\le k\) and \(1\le j\le m\), the following limit exists and is positive:
\[
L:=\lim_{x\to\infty}\sum_{\substack{1\le i\le k\\1\le j\le m}}
\lambda_{i,j}f_i(x+j)\in(0,\infty).
\]
If \(K_\rho\sum_{i,j}|\lambda_{i,j}|<L\), then there exists a basic Bohr set \(E\subset\N\) such that \(\bigcap_{i=1}^k R_{\rho\circ f_i}(E)\) is not thick.
\end{corollary}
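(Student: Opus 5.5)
Apply \Cref{LimitBohrObstruction} to the enlarged family of translates $f_{i,j}(x):=f_i(x+j)$, $1\le i\le k$, $1\le j\le m$, with coefficients $\lambda_{i,j}$. Its hypothesis is exactly our assumption $K_\rho\sum_{i,j}|\lambda_{i,j}|<L$. It gives a basic Bohr set $B\subset\Z$ and a threshold $X$ such that for every $x\ge X$ there is some pair $(i,j)$ with $\rho(f_i(x+j))\notin B$. Concretely, fix $0<\varepsilon<L-K_\rho\sum|\lambda_{i,j}|$ and take
\[
B=\left\{n\in\Z:\|\lambda_{i,j}n\|_{\mathbb R/2L\mathbb Z}<\tfrac{\varepsilon}{km}\ \text{for all }i,j\right\},
\]
which is a basic Bohr set in $\Z$ with frequencies $\lambda_{i,j}/(2L)$ and radius $\varepsilon/(2Lkm)$.

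Next, choose $E\subset\N$ so that its difference set lies in $B$. Set $E:=B_{\N}(v;\varepsilon/2)$, with the same frequencies $v=(\lambda_{i,j}/(2L))_{i,j}$ and half the radius $\varepsilon/(2Lkm)/2$. This is a basic Bohr set, of positive density and syndetic by \Cref{lem:bohr-positive-density}. If $a,b\in E$, the triangle inequality in $\T$ gives $\normT{(b-a)v_{i,j}}<\varepsilon/(2Lkm)$, so $E-E\subseteq B$. Hence if $n\in R_{\rho\circ f_i}(E)$, i.e.\ $a$ and $a+\rho(f_i(n))$ both lie in $E$, then $\rho(f_i(n))\in B$. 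Note that $B$ is symmetric, so the sign of the difference does not matter.

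Finally, suppose $\bigcap_i R_{\rho\circ f_i}(E)$ were thick. Then it would contain an interval $\{x+1,\dots,x+m\}$ with integer $x\ge X$. We need $x$ large, but thickness provides arbitrarily long intervals, and a long interval contains a subinterval of length $m$ starting beyond $X$ provided it extends past $X$. If the intersection is thick, it contains intervals of every length; such intervals are subsets of $\N$, so taking length $X+m$ forces a block of length $m$ beyond $X$. For each $i$ and each $1\le j\le m$ we then have $x+j\in R_{\rho\circ f_i}(E)$, hence $\rho(f_i(x+j))\in B$ for all $(i,j)$. This contradicts the conclusion of \Cref{LimitBohrObstruction} at the point $x$.

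The only mild subtlety is this: \Cref{LimitBohrObstruction} is stated for real $x\to\infty$, and we apply it at integer $x$, which is fine. No real obstacle is expected; the main point is the passage from $B$ to $E$ via $E-E\subseteq B$ at half radius.
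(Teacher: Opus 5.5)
Your proposal is correct and is essentially the paper's proof: you apply \Cref{LimitBohrObstruction} to the $km$ translates $f_i(\cdot+j)$, pass to the half-radius Bohr set $E$ so that $E-E\subseteq B$, and use a long block in the supposedly thick intersection, shifted past the threshold, to force every $\rho(f_i(x+j))$ into $B$. The only blemish is notational: writing $B_{\N}(v;\varepsilon/2)$ reuses $\varepsilon$ for a different quantity, but the radius you actually specify, $\varepsilon/(4Lkm)$, is the correct one.
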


\begin{proof}
Apply \Cref{LimitBohrObstruction} to the finite family of translates \(f_i(\cdot+j)\),
\(1\le i\le k\), \(1\le j\le m\). We obtain a basic Bohr set
\(B_0=B_{\mathbb{Z}}(v_1,\dots,v_d;\varepsilon)\) such that, for all sufficiently large \(x\), the shifts
\(\rho(f_i(x+j))\), \(1\le i\le k\), \(1\le j\le m\), cannot all belong to \(B_0\).

Put \(E=B(v_1,\dots,v_d;\varepsilon/2)\). Then every difference
of two elements of \(E\) belongs to \(B_0\). If \(\bigcap_{i=1}^kR_{\rho\circ f_i}(E)\) were thick, then it would contain arbitrarily long intervals. Taking a sufficiently long such interval and then a subinterval with large initial point, we could find \(x\) large such that \(x+j\in \bigcap_{i=1}^kR_{\rho\circ f_i}(E)\) for \(1\le j\le m\). Hence \(\rho(f_i(x+j))\) is a difference of two elements of \(E\), and so lies in \(B_0\), for all \(i,j\). This is a contradiction.
\end{proof}

\begin{proof}[Proof of \Cref{thm:A}]
Suppose that some \(g\in\nablar(f_1',\dots,f_k')\) satisfies \(\lim_{t\to\infty}|g(t)|=L\in(0,\infty)\). Replacing \(g\) by \(-g\) if necessary, we may assume that \(g(t)\to L\). Let \(m\ge2\) be such that \(f_i^{(m)}(t)\to0\) for all \(i\). Then by \Cref{lem:window}\ref{DerivvsIterated1} there exist real coefficients $\widetilde\gamma_{i,q}$, $0\le q\le m-2$, such that
\[
\widetilde f(t):=\sum_{i=1}^{k}\sum_{q=0}^{m-2}\widetilde\gamma_{i,q}f_i(t+q)
\]
satisfies $\Delta\widetilde f(t)\to L$. Therefore, for all $M\in\mathbb{N}$, we have
\begin{equation*}
ML=\lim_{t\to\infty}\widetilde f(t+M)-\widetilde f(t)
=
\lim_{t\to\infty}
\left(\sum_{i=1}^{k}\sum_{q=0}^{m-2}\widetilde\gamma_{i,q}f_i(t+M+q)-\sum_{i=1}^{k}\sum_{q=0}^{m-2}\widetilde\gamma_{i,q}f_i(t+q)\right).
\end{equation*}
Therefore, choosing a constant $M\in\mathbb{N}$ such that $K_\rho\cdot2\sum_{i=1}^{k}\sum_{q=0}^{m-2}\left|\widetilde\gamma_{i,q}\right|<ML$, \Cref{LimitBohrObstructionWithTranslates} implies that there exists a basic Bohr set \(E\subset\N\) such that $\bigcap_{i=1}^k R_{\rho\circ f_i}(E)$ is not thick.
\end{proof}

\begin{proof}[Proof of \Cref{thm:B}]
If some $g\in\nablar(f_1,\ldots,f_k)$ satisfies $\lim_{t\to\infty}|g(t)|\in(0,\infty)$, then, replacing \(g\) by \(-g\) if necessary, we may assume that \(g(t)\to L\in(0,\infty)\). Let \(m\in\mathbb N\) be such that \(f_i^{(m)}(t)\to0\) for all \(i\), and such that \(g\) is a linear combination of \(f_i^{(j)}\), \(0\le j\le m-1\). Then by \Cref{lem:window} there are real coefficients $\gamma_{i,q}$, $0\le q\le m-1$, such that
\[
\sum_{i=1}^{k}\sum_{q=0}^{m-1}\gamma_{i,q}f_i(t+q)\stackrel{t\to\infty}\longrightarrow L.
\]
So for all $M>0$,
\[
\sum_{i=1}^{k}\sum_{q=0}^{m-1}\gamma_{i,q}\cdot(Mf_i)(t+q)\stackrel{t\to\infty}\longrightarrow ML.
\]
Choosing a constant $M\in\mathbb{N}$ such that $K_\rho\cdot\sum_{i=1}^{k}\sum_{q=0}^{m-1}\left|\gamma_{i,q}\right|<ML$, \Cref{LimitBohrObstructionWithTranslates} implies that there exists a basic Bohr set \(E\subset\N\) such that $\bigcap_{i=1}^k R_{\rho\circ(Mf_i)}(E)$ is not thick.
\end{proof}

\begin{proof}[Proof of \Cref{Thm:C}]
Let \(0<\varepsilon<\inf_{x\geq N}\left\|\sum_i\lambda_if_i(x)\right\|_{\mathbb T}-K_\rho\sum_i|\lambda_i|\), and define the basic Bohr set
\[
E:=\left\{n\in\N:
\|\lambda_j n\|_{\mathbb T}<\frac{\varepsilon}{2k}
\text{ for }1\le j\le k\right\}.
\]
For all \(x\geq N\), by the triangle inequality we have
\begin{align*}
\sum_{i=1}^k\|\lambda_i\rho(f_i(x))\|_{\mathbb T}
&\geq
\left\|\sum_{i=1}^k\lambda_i\rho(f_i(x))\right\|_{\mathbb T} \\
&\geq
\left\|\sum_{i=1}^k\lambda_i f_i(x)\right\|_{\mathbb T}
-K_\rho\sum_{i=1}^k|\lambda_i|>\varepsilon.
\end{align*}
If \(x\in\bigcap_{i=1}^k R_{\rho\circ f_i}(E)\), then
\(\rho(f_i(x))\in E-E\) for every \(i\). Hence
\(\|\lambda_i\rho(f_i(x))\|_{\mathbb T}<\varepsilon/k\) for every \(i\),
contradicting the preceding inequality. Therefore
\(\bigcap_{i=1}^k R_{\rho\circ f_i}(E)\cap [N,\infty)=\varnothing\).
\end{proof}

\begin{proof}[Proof of \Cref{Thm:CHomothety}]
Apply \Cref{Thm:C} to the functions $Mf_1,\dots,Mf_k$, using that for big enough $M$, we have
\begin{equation}
\inf_{x\geq N}\left\|\sum_{i}\lambda_if_i(x)\right\|_{\mathbb{T}}
=
\inf_{x\geq N}\left\|\sum_{i}\frac{\lambda_i}{M}\cdot Mf_i(x)\right\|_{\mathbb{T}}
> K_\rho\sum_i\left|\frac{\lambda_i}{M}\right|.\qedhere
\end{equation}
\end{proof}

\section{Examples}
\label{sec:examples}
\begin{proof}[Proof of \Cref{exa:C}]
Let $h_1(t)=t^{3/2}$ and $h_2(t)=\lambda t^{3/2}+t$.

Any function $F\in\nablaz(h_1,h_2)$ can be expressed, for some $M\in\mathbb{N}$ and $a_m,b_m\in\mathbb{Z}$, as
\[
F(t)=\sum_{m=0}^{M}a_mh_1^{(m)}(t)+\sum_{m=0}^{M}b_mh_2^{(m)}(t)
=\sum_{m=0}^{M}(a_m+\lambda b_m)c_mt^{3/2-m}+b_0t+b_1.
\]
If all pairs $(a_m,b_m)$ vanish, then $F=0$.  Otherwise let $m_0$ be the least index with $(a_{m_0},b_{m_0})\ne(0,0)$.  Since $\lambda\notin\Q$, $a_{m_0}+\lambda b_{m_0}\ne0$.  If $m_0\le1$, the corresponding term forces $|F(t)|\to\infty$.  If $m_0\ge2$, then $b_0=b_1=0$, and all remaining powers tend to $0$. Hence $\lim_{t\to\infty}|F(t)|\in\{0,\infty\}$.

We now construct $E$ for the floor convention. As $h_2'(t)-\lambda h_1'(t)=1$, for all $M>0$ we have
\begin{equation}
\label{RandomEq2we99}
(h_2(t+M)-\lambda h_1(t+M))-(h_2(t)-\lambda h_1(t))=M.
\end{equation}

Following the idea of the proof of \Cref{LimitBohrObstructionWithTranslates}, let \(M\geq100(1+|\lambda|)\), \(R=|\lambda|+10\), and
\begin{equation*}
E=\left\{n\in\N;\|n\|_{\mathbb{R}/2M\mathbb{Z}},\|\lambda n\|_{\mathbb{R}/2M\mathbb{Z}}<R\right\}.
\end{equation*}
Then $R_{\lfloor h_1\rfloor}(E)\cap R_{\lfloor h_2\rfloor}(E)$ is not thick: if so, some $n\in\mathbb{N}$ would satisfy $n,n+M\in R_{\lfloor h_1\rfloor}(E)\cap R_{\lfloor h_2\rfloor}(E)$, so 
\begin{equation*}
\|\lfloor h_2(n+M)\rfloor\|_{\mathbb{R}/2M\mathbb{Z}},
\|\lambda\lfloor h_1(n+M)\rfloor\|_{\mathbb{R}/2M\mathbb{Z}},
\|\lfloor h_2(n)\rfloor\|_{\mathbb{R}/2M\mathbb{Z}},
\|\lambda\lfloor h_1(n)\rfloor\|_{\mathbb{R}/2M\mathbb{Z}}
< 2R,
\end{equation*}
so
\begin{equation*}
\|h_2(n+M)\|_{\mathbb{R}/2M\mathbb{Z}},
\|\lambda h_1(n+M)\|_{\mathbb{R}/2M\mathbb{Z}},
\|h_2(n)\|_{\mathbb{R}/2M\mathbb{Z}},
\|\lambda h_1(n)\|_{\mathbb{R}/2M\mathbb{Z}}
<2R+1+|\lambda|,
\end{equation*}
so applying the triangle inequality to \Cref{RandomEq2we99} we would obtain
\(\|M\|_{\mathbb{R}/2M\mathbb{Z}}<8R+4+4|\lambda|\), a contradiction to
\(M\ge100(1+|\lambda|)\).

Finally, we check that $R_{\lfloor h_1\rfloor}(E)\cap R_{\lfloor h_2\rfloor}(E)$ is piecewise syndetic. Let
\begin{align*}
A&=\{n\in\mathbb{N};\|\lfloor n^{3/2}\rfloor\|_{\mathbb{R}/2M\mathbb{Z}},\|\lfloor\lambda n^{3/2}\rfloor\|_{\mathbb{R}/2M\mathbb{Z}},\|\lfloor\lambda^2 n^{3/2}\rfloor\|_{\mathbb{R}/2M\mathbb{Z}}<1\},\\
B&=\{n\in\mathbb{N};\| n\|_{\mathbb{R}/2M\mathbb{Z}},\| \lambda n\|_{\mathbb{R}/2M\mathbb{Z}}<1\}.
\end{align*}
Then $A$ is thick by applying \Cref{thm:BMR-thick} to the positive-density set $2M\mathbb N$ and the functions $t^{3/2},\lambda t^{3/2},\lambda^2t^{3/2}$, and $B$ is syndetic by \Cref{lem:bohr-positive-density}. Hence \(A\cap B\) is piecewise syndetic. So it will be enough to prove that $A\cap B\subset R_{\lfloor h_1\rfloor}(E)\cap R_{\lfloor h_2\rfloor}(E)$.
 
For \(n\in A\cap B\), put \(u_1=\lfloor h_1(n)\rfloor\) and \(u_2=\lfloor h_2(n)\rfloor\). Then \(\|u_1\|_{\mathbb R/2M\mathbb Z}<1\) and
\(\|\lambda u_1\|_{\mathbb R/2M\mathbb Z}<|\lambda|+2\). Also,
\[
\|u_2\|_{\mathbb R/2M\mathbb Z}<3.
\]
It remains only to bound \(\|\lambda u_2\|_{\mathbb R/2M\mathbb Z}\). Using the fact that
\(|\lfloor x+y\rfloor-(\lfloor x\rfloor+\lfloor y\rfloor)|\leq1\), for all \(n\in A\cap B\) we have
\begin{multline*}
\|\lfloor\lambda h_2(n)\rfloor\|_{\mathbb{R}/2M\mathbb{Z}}
=
\|\lfloor\lambda^2n^{3/2}+\lambda n\rfloor\|_{\mathbb{R}/2M\mathbb{Z}}\\\leq 
\|\lfloor\lambda^2n^{3/2}\rfloor\|_{\mathbb{R}/2M\mathbb{Z}}+\|\lfloor\lambda n\rfloor\|_{\mathbb{R}/2M\mathbb{Z}}+1
<3.
\end{multline*}
Hence \(\|\lambda u_2\|_{\mathbb R/2M\mathbb Z}<|\lambda|+4\). Thus \(u_1,u_2\) both lie in the smaller Bohr neighborhood
\[
\left\{r\in\mathbb Z:\|r\|_{\mathbb R/2M\mathbb Z},\|\lambda r\|_{\mathbb R/2M\mathbb Z}<|\lambda|+5\right\}.
\]
Since \(R=|\lambda|+10\), each such \(u_i\) is a return time for $E$: choose a sufficiently large positive element \(a\) in the Bohr set with radius \(R-(|\lambda|+5)\), then \(a,a+u_i\in E\).

 The ceiling and nearest-integer conventions are obtained by repeating the same proof with \(\rho=\lceil\cdot\rceil\) or \(\rho=[\cdot]\) in place of \(\lfloor\cdot\rfloor\). The estimates above only use \(|\rho(x)-x|\leq1\) and \(|\rho(x+y)-\rho(x)-\rho(y)|\leq2\), so the constants \(10\) and \(100\) absorb the resulting changes.
\end{proof}

\begin{proof}[Proof of \Cref{exp:D}]
The integer derivative-span assertion is proved exactly as in \Cref{exa:C}, with the linear term $t$ replaced by $Q(t+\xi)$. Since
\[
-\frac{\lambda}{Q}g_1(t)+\frac{1}{Q}g_2(t)=t+\xi,
\]
and \(\inf_{x\in\mathbb N}\|x+\xi\|_{\mathbb T}=\|\xi\|_{\mathbb T}>(1+|\lambda|)/Q\), \Cref{Thm:C} applies to the coefficients \(-\lambda/Q\) and \(1/Q\). Thus, for each of the choices \(\rho=\floor{\cdot}\), \(\rho=\ceil{\cdot}\), and \(\rho=\round{\cdot}\), there exists a basic Bohr set \(E\subset\N\) such that
\(R_{\rho\circ g_1}(E)\cap R_{\rho\circ g_2}(E)=\varnothing\).
\end{proof}

\begin{proof}[Proof of \Cref{exa:E}]
Suppose $q\in\mathcal P_{\Z}(h_1,h_2,h_3)$.  Then there exist $a,b,c\in\R$, not all zero, such that
\[
(a+\lambda b)t^{3/2}+ct^2+bQ(t+\sqrt2)-q(t)\to0.
\]
Since $t^{3/2}$ is not $o(1)$-close to any polynomial, $a+\lambda b=0$.  Hence
\[
q(t)=ct^2+bQt+bQ\sqrt2.
\]
As $q\in\Z[t]$, the coefficients $c$, $bQ$, and $bQ\sqrt2$ must be integers. As $bQ$ and $bQ\sqrt2$ are integers we have $Qb=0$, so $b=0$ and $a=0$. Therefore $q(t)=ct^2$ with $c\in\Z\setminus\{0\}$.  Conversely, every nonzero integer multiple of $t^2$ is obtained from $h_3$.  Thus
\[
\mathcal P_{\Z}(h_1,h_2,h_3)=\{ct^2:c\in\Z,\ c\ne0\}.
\]
This family is jointly intersective: for every modulus $m$, taking $n=m$ gives $cm^2\equiv0\pmod m$ for every $c\in\Z$. 

The empty-return assertion follows from \Cref{exp:D}: Since the pairwise intersection is empty, the triple intersection is empty as well.
\end{proof}

\begin{proposition}[An intrinsic sufficient condition for the BMR polynomial hypothesis]\label{prop:subspace-sufficiency}
Retain the notation of \Cref{thm:BMR-A}. Let \(\mathcal P_{\R}=\operatorname{poly}(f_1,\ldots,f_k)\) and \(P_{\Z}=(\mathcal P_{\R}\cap\Z[t])\setminus\{0\}\). If \(P_{\Z}\) is jointly intersective and
\[
\mathcal P_{\R}=\operatorname{span}_{\R}P_{\Z},
\]
then the strong polynomial-span hypothesis of \Cref{thm:BMR-A} holds.
\end{proposition}

\begin{proof}
Since $\operatorname{span}_{\R}(f_1,\ldots,f_k)$ is finite-dimensional, so is $\mathcal P_{\R}$. If $\mathcal P_{\R}=\{0\}$, the BMR polynomial-span hypothesis is immediate, for instance by taking the jointly intersective polynomial $q(t)=t$. Otherwise, choose a finite basis $q_1,\ldots,q_\ell$ of $\mathcal P_{\R}$ from $P_{\Z}$. This finite subfamily is jointly intersective because $P_{\Z}$ is. Hence
\[
\mathcal P_{\R}\subset\operatorname{span}_{\R}(q_1,\ldots,q_\ell),
\]
which is exactly the BMR polynomial-span hypothesis.
\end{proof}

\begin{remark}[Shadow-space decoupling]
For the triple in \Cref{exa:E}, the real polynomial-shadow space $\mathcal P_{\R}$ contains both
\[
Q\left(t+\sqrt2\right)=h_2-\lambda h_1\textup{ and }
 t^2=h_3.
\]
However, the integer shadow only sees the $t^2$-axis:
\[
\mathcal P_{\Z}(h_1,h_2,h_3)=\{ct^2:c\in\Z,\ c\ne0\}.
\]
Thus, the jointly intersective integer shadow is supplied purely by the visible polynomial direction \(t^2\). In the language of \Cref{prop:subspace-sufficiency}, we have the strict inclusion
\[
\operatorname{span}_{\R}\mathcal P_{\Z}(h_1,h_2,h_3)\subsetneq \mathcal P_{\R}(h_1,h_2,h_3).
\]The Bohr obstruction arises precisely from the real direction $Q(t+\sqrt2)$ living in this complementary gap, which remains invisible to $P_{\Z}$ because of its irrational constant term.
\end{remark}

\section*{Acknowledgments}
The authors would like to thank Professors Wen Huang, Piotr Oprocha, Song Shao, and Xiangdong Ye for helpful discussions and valuable suggestions. We also thank Joel Moreira and Vitaly Bergelson for bringing to Saúl Rodríguez's attention the recent work of Kangbo Ouyang, Leiye Xu, and Shuhao Zhang on Bohr obstructions to recurrence along Hardy-field sequences, which led to the present collaboration.

\bigskip
\section*{Addresses}

\noindent\textsc{Kangbo Ouyang}\\
School of Mathematical Sciences, University of Science and Technology of China,\\
Hefei, Anhui 230026, P.R. China\\
\textit{Email address}: \texttt{oy19981231@mail.ustc.edu.cn}

\medskip
\noindent\textsc{Sa\'ul Rodr\'iguez Mart\'in}\\
Department of Mathematics, The Ohio State University,\\
100 Math Tower, 231 West 18th Avenue,\\
Columbus, OH 43210-1174, USA\\
\textit{Email address}: \texttt{rodriguezmartin.1@osu.edu}

\medskip
\noindent\textsc{Leiye Xu}\\
School of Mathematical Sciences, University of Science and Technology of China,\\
Hefei, Anhui 230026, P.R. China\\
\textit{Email address}: \texttt{leoasa@mail.ustc.edu.cn}

\medskip
\noindent\textsc{Shuhao Zhang}\\
School of Mathematical Sciences, University of Science and Technology of China,\\
Hefei, Anhui 230026, P.R. China\\
\textit{Email address}: \texttt{yichen12@mail.ustc.edu.cn}

\end{document}